\newtheorem{theo}{Theorem}[section]
\newtheorem{prop}[theo]{Proposition}
\newtheorem{cor}[theo]{Corollary}
\newtheorem{lm}[theo]{Lemma}
\theoremstyle{definition}
\newtheorem{ex}[theo]{Example}
\newcommand{\la}{\lambda}
\newcommand{\La}{\Lambda}
\newcommand{\Z}{\mathbb{Z}}
\newcommand{\we}{\wedge}
\DeclareMathOperator{\Hom}{Hom}
\DeclareMathOperator{\Ext}{Ext}
\title{On homomorphisms involving a hook Weyl module}	
\author{Mihalis Maliakas}
\address{Department of Mathematics, University of Athens} 
\email{mmaliak@math.uoa.gr}
\author{Dimitra-Dionysia Stergiopoulou}
\address{Department of Mathematics, University of Athens}
\email{dstergiop@math.uoa.gr}
\begin{document}

\maketitle
\begin{abstract}
	For an infinite field $k$ of positive characteristic, we determine all homomorphisms between Weyl modules for $GL_n(k)$, where one of the partitions is a hook. As a consequence we obtain a non-vanishing result concerning homomorphisms between Weyl modules for algebraic groups of type $ B $, $ C $ and $ D$ when one of the partitions is a hook. 
\end{abstract}
\section{Introduction}
Let $k$ be an infinite field of characteristic $p>0$. In the polynomial representation theory of the general linear group $GL_n(k)$, the Weyl modules $\Delta(\lambda)$, indexed by partitions $\lambda$, play an important role and determining their structure is a major problem in the area. In particular, the spaces $\Hom_{GL_n{(k)}}(\Delta(\lambda), \Delta(\mu))$ are not well understood.

In \cite{CL} and \cite{CP} nonzero homomorphisms were constructed for pairs of partitions that differ by a multiple of a positive root under some conditions on $p$. Another of the few general results is the existence of nonzero homomorphisms for neighboring Weyl modules \cite{Jan}. Homomorphisms between hooks that differ by a positive root where classified in \cite{OM}. More generally homomorphisms corresponding to pairs of partitions that differ by a positive root were classified in \cite{Ku2}, see also \cite{An}, as a corollary of  their result on the corresponding integral $\Ext^1$ group. A result for $GL_3(k)$ was obtained in \cite{BF} when the partitions differ by a multiple of a positive root. In \cite{CoP} all homomorphisms between Weyl modules for $GL_3(k)$ were classified for $p>2$.

In this paper, we classify all homomorphisms between Weyl modules when one of $\lambda, \mu$ is a hook in the sense that we give equivalent arithmetic conditions on $p, \lambda, \mu$ expressed via certain binomial coefficients, such that there is a non-zero map $\phi: \Delta(\lambda) \to \Delta(\mu)$. Moreover, under these conditions we show that $\phi$ is unique up to scalar multiple and provide an explicit description. 

After the first draft of this paper was written, we became aware of \cite{Lou}, where homomorphisms between Specht modules were classified when one of the partitions is a hook, using Khovanov-Lauda-Roquier algebras. By a result of \cite{CL}, this implies a classification of homomorphisms between the corresponding Weyl modules when $p>2$. Our proof is substantially different and covers the case $p=2$. It relies on a detailed analysis of the images of the relations of $\Delta(\lambda)$ under the generators of $\Hom(D(\lambda), \Delta (h)))$, where $h$ is a hook partition and $D(\lambda)=D(\lambda_1) \otimes \cdots \otimes D(\lambda_m)  $ is the indicated tensor product of divided powers of the natural $GL_n(k)$-module, $\lambda = (\lambda_1,...,\lambda_m)$. Working with weight subspaces of $\Delta(h)$, this boils down to a certain linear system, essentially defined over the integers, and we find equivalent conditions so that it has a nonzero solution modulo $p$. A discussion on the relationship with Loubert's result is contained in subsection 3.2.

In Section 7 we consider $G$ a classical group of type  $B_n, C_n, D_n$ and observe that the natural embedding $G \subseteq GL_N(k)$, where $N=2n+1,2n$, induces, under suitable conditions, an injective map 
\[ \Hom_{GL_N(k)}(\nabla(\la), \nabla(\mu)) \to \Hom_{G}(\nabla_G(\overline{\la}), \nabla_G(\overline{\mu})), \]
where $\la, \mu$ are any partitions of $r$ with at most $n$ parts and $\nabla(\la), \nabla_G(\overline{\la})$ are the induced modules of $GL_N(k), G$ respectively with highest weight determined by the partition $\la$ in the usual manner (Proposition 7.2). It follows immediately that our main theorem yields a non-vanishing result for $\Hom_{G}(\nabla_G(\overline{\la}), \nabla_G(\overline{\mu}))$ when $\mu$ is a hook (Corollary 7.1). 

Section 2 is devoted to notation and recollections. The main result, Theorem 3.1, is stated in Section 3. In Section 4 we find equivalent conditions for a map $D(\lambda) \to \Delta(h)$ to induce a map $\Delta(\lambda) \to \Delta(h)$. That these are equivalent to the conditions of Theorem 3.1 is shown in Section 5. As a corollary of the main result, we recover in Section 6 with a different proof the integral $\Ext^1$ groups between any two hooks \cite{MS}. In Section 7 we consider classical groups $G$ of type $B_n, C_n $ and $D_n$ and show the non-vanishing result on induced $G$-modules mentioned above.

\section{Notation and Recollections}
\subsection{Notation.} Let $k$ be an infinite field of
characteristic $p>0$. We will be working with homogeneous polynomial representations of $GL_n(k)$ of degree $r$, or equivalently, with modules over the Schur algebra $S=S_k(n,r)$. A standard reference here is  \cite{Gr}.
Let $V=k^n$ be the natural $GL_n(k)$=module. By $DV=\sum_{i\geq 0}D_iV$ and $\La V=\sum_{i\geq 0}\La^{i}V$
we denote the divided power algebra of $V$ and the exterior algebra of $V$ respectively. We will usually omit $V$ and write $D_i$ and $\La^i$.  By $\wedge(n,r)$ we denote the set of sequences $a=(a_1, \dots, a_n)$ of nonnegative integers that sum to $r$ and by $\wedge^+(n,r)$ we denote the subset of $\wedge(n,r)$ consisting of sequences $\lambda=(\lambda_1, \dots, \lambda_n)$ such that $\lambda_1 \ge \lambda_2 \dots \ge \lambda_n$. Elements of $\wedge^+(n,r)$ are referred to as partitions. A hook $h$ is a partition of the form $h=(a,1^b)$. 

For $\lambda \in \wedge^+(n,r)$, we denote by $\Delta(\lambda)$ the corresponding Weyl module for $S$. For example, if $\lambda =(r)$, then $\Delta(\lambda) =D_r$, and if $\lambda =(1^r)$, then $\Delta(\lambda) =\La^{r}$.

For $a=(a_1,\dots, a_n) \in \wedge(n,r)$, we denote by $D(a)$ or $D(a_1,\dots,a_n)$ the tensor product $D_{a_1}\otimes \dots \otimes D_{a_n}$. All tensor products in this paper are over $k$ unless otherwise specified.
\subsection{Relations for Weyl modules, straightening.} We recall from \cite{ABW} the following description of $\Delta (\lambda)$ in terms of generators and relations. Let $\lambda=(\lambda_1,\dots,\lambda_m) \in \wedge^+(n,r)$, where $\lambda_m >0$. There is an exact sequence of $S$-modules \[
\sum_{i=1}^{m-1}\sum_{t=1}^{\lambda_{i+1}}D(\lambda_1,\dots,\lambda_i+t,\lambda_{i+1}-t,\dots,\lambda_m) \xrightarrow{\square} D(\lambda) \xrightarrow{d'_\lambda} \Delta(\lambda) \to 0,
\]
where the restriction of $ \square $ to the summand $M(t)=D(\lambda_1,\dots,\lambda_i+t,\lambda_{i+1}-t,\dots,\lambda_m)$ is the composition
\[
M(t) \xrightarrow{1\otimes\cdots \otimes \Delta \otimes \cdots 1}D(\lambda_1,\dots,\lambda_i,t,\lambda_{i+1}-t,\dots,\lambda_m)\xrightarrow{1\otimes\cdots \otimes m \otimes \cdots 1} D(\lambda),
\]
where $\Delta:D(\lambda_i+t) \to D(\lambda_i,t)$ and $m:D(t,\lambda_{i+1}-t) \to D(\lambda_{i+1})$ are the indicated components of the comultiplication and multiplication respectively of the Hopf algebra $DV$ and $d'_\lambda$ is the map in \cite{ABW}, Def.II.13.

We recall the straightening law and the standard basis theorem for $\Delta(h)$, where $h=(a,1^b)$. This can be found more generally for skew shapes in \cite{ABW}. Fix an ordered basis $e_1,..,e_n$ of $V$. For simplicity, we denote the element $e_
i$ by $i$ and accordingly the element $e_{i_1}^{(a_1)}  ...  e_{i_t}^{(a_t)} \otimes e_{j_1} \otimes ... \otimes e_{j_b} \in D_a \otimes D(1^b)$ by ${i_1}^{(a_1)}  ...  {i_t}^{(a_t)} \otimes {j_1}\otimes ...\otimes {j_b}$. The image of this element under $d'_h$ will be denoted by ${i_1}^{(a_1)}  ...  {i_t}^{(a_t)} / {j_1} ... {j_b}$. Such an element will be called a tableau of $\Delta(h)$. We note that such a tableau is symmetric in the ${i_1}^{(a_1)}, ...,{i_t}^{(a_t)}$ and  skew-symmetric in the $j_1,\dots,j_b$. Now suppose $i_1<i_2<...<i_t$ and $j_1 \le i_1$. Then in $\Delta(h)$ we have the straightening law
$${i_1}^{(a_1)}...{i_t}^{(a_t)} / {j_1} ... {j_b} = \begin{cases} -\sum\limits_{s\geq 2}{i_1}^{(a_1+1)}  ...{i_s}^{(a_s-1)}...{i_t}^{(a_t)} / {i_s}{j_2} ... {j_b}, & \mbox{if}\;j_1=i_1 \\ -\sum\limits_{s\geq 1}j_1{i_1}^{(a_1)}  ...{i_s}^{(a_s-1)}...{i_t}^{(a_t)} / {i_s}{j_2} ... {j_b}, & \mbox {if}\;j_1<i_1. \end{cases} $$

A $k$-basis of $\Delta(h)$ is the set of the elements ${i_1}^{(a_1)}  ...  {i_t}^{(a_t)} / {j_1} ... {j_b} $, where $a_1+...+a_t=a, i_1<...<i_t$ and $i_1 <j_1< ... <j_b.$

We refer to these basis elements as standard tableaux of $\Delta(h)$. The content (or weight) $\beta$ of such an element ${i_1}^{(a_1)}  ...  {i_t}^{(a_t)} / {j_1} ... {j_b} $ is the sequence $\beta=(\beta_1,...,\beta_n)$, where $\beta_i$ is the number of occurrences of $ i $. If $\beta_{k+1}=...=\beta_n=0$ we may write $\beta=(\beta_1,...,\beta_k)$
\subsection{Weight spaces} If $(a_1,...,a_n) \in \we(n,r)$ and $h \in \we^+(n,r)$ is a hook, we identify the $k$-space $\Hom_S(D(a_1,...,a_n),\Delta(h))$ with the subspace of $\Delta(h)$ spanned by the standard tableaux of content $(a_1,...,a_n)$ according to \cite{AB}, Section 2. Hence from the results in 2.2 we obtain the following. Suppose $\lambda, h \in \we^+(n,r)$, where $\lambda=(\lambda_1,...,\lambda_m), \lambda_m>0, h=(a,1^b)$. If $b \ge 1$, then a $k$-basis of $\Hom_S(D(\lambda),\Delta(h))$ is the set $\{\phi_J:J \in B(\lambda,h)\}$, where 
\[
B(\lambda,h)=\{J=(j_1,...,j_b) \in \mathbb{N}^b : 2 \le j_1 <...<j_b \le m\},
\] and $\phi_J:D(\lambda) \to \Delta(h)$ is the composition 
\begin{align*}D(\lambda) &\to D(\lambda_1,...\lambda_{j-1},1,...,\lambda_{j_b}-1,1,...,\lambda_m) \\
&\simeq D(\lambda_1,...\lambda_{j-1},...,\lambda_{j_b},1,...,\lambda_m)\otimes D(1^b) =D(h) \to \Delta(h),
\end{align*}
where the first map on the factor $D(\lambda_{j_s})$ is the two-fold comultiplication $D(\lambda_{j_s}) \to D(\lambda_{j_s}-1,1), s=1,...,b,$ and on the rest of the factors is the identity map, the middle map is the indicated rearrangement of factors in the tensor product, and the last map is $d'_h$. If $b=0$, then $\Hom_S(D(\lambda),\Delta(h))$ is one dimensional spanned by the map $\phi_{\emptyset}:D(\lambda)\to \Delta(h)$ which is the multiplication $D(\lambda_1,...,\lambda_m) \to D(r)=\Delta(h).$
\section{Main result}
\subsection{Main result}
Suppose $\lambda,h \in \we^+(n,r), \lambda=(\lambda_1,...,\lambda_m), \lambda_m \neq 0$ and $h=(a,1^b)$, where it is understood that $h=(r)$ if $b=0$. We may assume that $m \ge b+1$ since otherwise $\Hom_S(\Delta(\lambda),\Delta(h))=0$ as $\lambda$ is not less than or equal to $h$ in the dominance order of partitions. If $m=b+1$, then the first column of $\lambda$ and the first column of $h$ have equal lengths and by column removal (\cite{Ku1}, Proposition 1.2), we have $\Hom_S(\Delta(\lambda),\Delta(h))=\Hom_{S'}(\Delta(\lambda'),\Delta(h'))$, where $\lambda'=(\lambda_1-1,...,\lambda_m-1), h'=(a-1), S'=S_k(n,r-b-1)$. If $\lambda'$ has only one row, then $\lambda'=h'$ and $\Hom_{S'}(\Delta(\lambda'),\Delta(h'))=k$ by \cite{Jan}, Corollary II.6.24. Therefore we assume throughout that $m \ge b+2$.

In order to state the main result of this paper we will need further notation. If $x,y$ are positive integers, let \[R(x,y)=\gcd\{\tbinom{x}{1},  \tbinom{x+1}{2},...,\tbinom{x+y-1}{y}\}.\]
If $\lambda \in \we^+(n,r)$ such that $\lambda \neq (1,...,1)$, let \[q=\max\{i:\lambda_i \ge 2\}.\] If $\lambda=(1,...,1)$, we define $q=0$ and $\lambda_q=0$.

\begin{theo} Let $k$ be an infinite field of characteristic $p>0$ and $\lambda,h \in \we^{+}(n,r)$, such that $\lambda=(\lambda_1,...,\lambda_m), \lambda_m \neq 0, h=(a,1^b)$ and $m \ge b+2$. Then the following hold.
	\begin{enumerate}
		\item[i)] $\dim\Hom_S(\Delta(\lambda),\Delta(h)) \le 1.$
		\item [ii)] Suppose $q \ge b+1$. Then $\Hom_S(\Delta(\lambda),\Delta(h)) \ne 0$ if and only if $ p $ divides  all of the following integers \begin{align*}
		&R(\lambda_i,\lambda_{i+1}-1), i=1,...,b-1, \\ &R(\lambda_b,\lambda_{b+1}), \\ &R(\lambda_i+1,\lambda_{i+1}), i=b+1,...,m-1.
		\end{align*}
		\item[iii)] Suppose $q \le b$. Then $\Hom_S(\Delta(\lambda),\Delta(h)) \ne 0$ if and only if $ p $ divides  all of the following integers \begin{align*}
		&R(\lambda_i,\lambda_{i+1}-1), i=1,...,q-1, \\ &\lambda_q+b+2-q, \\ &2, \text{if } m-b \ge3.
		\end{align*}
		\item [iv)] In cases ii) and iii) above, a nonzero element of $\Hom_S(\Delta(\lambda),\Delta(h))$ is induced by the map $\psi= \sum_{I \in B(\lambda,h)}d_I \phi_I$, where \[
		d_I=\prod_{u=1}^{b} \prod_{v=u}^{i_u-2} (-\lambda_{v+1}), I=(i_1,...,i_b).\]
	\end{enumerate}
\end{theo}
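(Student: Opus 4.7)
The plan is to use the ABW presentation to convert the homomorphism problem into a concrete linear system. A map $\psi : \Delta(\lambda) \to \Delta(h)$ corresponds to an element of $\Hom_S(D(\lambda),\Delta(h))$ that vanishes on the image of $\square$. Writing $\psi = \sum_{J \in B(\lambda,h)} c_J \phi_J$ in the basis of Section 2.3, the descent condition becomes: for each pair $(i,t)$ with $1 \le i \le m-1$ and $1 \le t \le \lambda_{i+1}$, the composition $\psi \circ \square$ vanishes on the summand $M(t)$. I would evaluate each such composition on a generating weight vector of $M(t)$ by chasing it through divided-power comultiplication and multiplication into $D(\lambda)$, applying each $\phi_J$, and finally straightening in $\Delta(h)$ using the law from Section 2.2. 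The outcome is, for each $(i,t)$, a linear relation among the $c_J$ whose coefficients are (signed) products of binomial coefficients $\binom{\lambda_i+t}{t}$ and small factors coming from the straightening. This is the business of Section 4.

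Next I would separate the relations into three regimes according to the position of $i$ relative to $b$: (A) $i \ge b+1$, where both swapped rows lie below the arm and the contribution to $\Delta(h)$ only involves the exterior slots; (B) $i = b$, the junction where a part of the ``fat'' first row meets the exterior tail; (C) $1 \le i \le b-1$, where the swap happens entirely among rows feeding the exterior slots. In each regime the associated relation, after substituting the proposed candidate $d_I = \prod_{u=1}^{b}\prod_{v=u}^{i_u-2}(-\lambda_{v+1})$, telescopes: the products of $-\lambda_{v+1}$ cancel across adjacent $I$'s and one is left with a family of congruences of the form ``$p \mid \binom{x+t-1}{t}$ for $t = 1,\ldots,y$'', i.e.\ $p \mid R(x,y)$. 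Regimes (A), (B), (C) produce respectively the conditions $p \mid R(\lambda_i+1,\lambda_{i+1})$ for $i \ge b+1$, $p \mid R(\lambda_b,\lambda_{b+1})$, and $p \mid R(\lambda_i,\lambda_{i+1}-1)$ for $i \le b-1$, establishing (ii) together with (iv) for the case $q \ge b+1$.

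For part (iii), when $q \le b$ the bottom parts of $\lambda$ equal $1$, so the $M(t)$ summands for $i \ge q+1$ degenerate: they have only $t=1$, and in that single relation the standard tableaux produced from two adjacent singletons in the exterior part of $h$ collapse under anti-symmetry. A direct calculation of that relation yields the single congruence $p \mid \lambda_q + b+2 - q$, and the anti-symmetry of identical exterior entries appearing when $m-b\ge 3$ contributes the extra $p \mid 2$. The regime $i \le q-1$ is unchanged and gives $p \mid R(\lambda_i,\lambda_{i+1}-1)$ for $i \le q-1$. For the uniqueness statement (i), the relation at the pair $(1,1)$, read off the coefficient of a carefully chosen distinguished standard tableau, already expresses each $c_J$ as a determined scalar multiple of a fixed $c_{J_0}$; hence the solution space is at most one-dimensional, and combined with the existence of $\psi$ from (iv) under the divisibility hypotheses this completes both (i) and the necessity direction of (ii) and (iii).

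The main obstacle is the bookkeeping in the central calculation of $\psi \circ \square$. Each application of comultiplication introduces a sum over compositions and each straightening of a column entry $j_1 \le i_1$ expands into a sum over $s \ge 1$ (or $s \ge 2$), so one must identify, among many terms, a distinguished standard tableau whose coefficient is a single binomial, and then show that the proposed $d_I$ makes every such distinguished coefficient vanish modulo $p$ while all remaining coefficients cancel through telescoping of the $\lambda_v$'s. A secondary subtlety is the case split $q \le b$ vs.\ $q \ge b+1$: when the tail of $\lambda$ has $1$'s, certain $t$-ranges become empty and certain $R(\lambda_i,\lambda_{i+1}-1)$ with $\lambda_{i+1}=1$ trivialize, so care is needed to ensure that the stated list of divisibility conditions is exactly what the descent analysis produces, with no extraneous or missing congruences.
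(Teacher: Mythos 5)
Your overall strategy matches the paper's: Lemma~4.1 reduces the problem to the vanishing of $\phi\circ\square$ on each summand $M(t)$, which is precisely the family of linear relations in the $c_J$'s you describe; Corollary~4.2 establishes $\dim\le 1$ and the formula for $d_I$ from those relations; Section~5 then shows the relations are equivalent to the stated divisibility conditions. So the route is right, but two of your specific claims are wrong and would not survive being written out.

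First, the uniqueness argument does not follow from ``the relation at the pair $(1,1)$'' alone. That relation, $R_1(1,1)$: $\binom{\lambda_1+1}{1}c_I+\sum_{u=1}^b(-1)^u c_{I[i_u\to 2]}=0$ (for $2\notin I$) together with $R_1(1,2)$, does not link all the $c_J$'s to a single $c_{J_0}$. Already for $b=1$, $m=4$, if $\lambda_1\equiv-1$ these relations only force $c_{(2)}=0$ and leave $c_{(3)},c_{(4)}$ unconstrained. The paper instead runs an induction on the lexicographic order on $B(\lambda,h)$ using the two-term relations $R_i(1,1)$ for $i\ge 2$: for $I=J[i\to i+1]$ one gets $c_I=-\lambda_i\,c_J$, and chaining these from $I_0=(2,\dots,b+1)$ produces $c_I=d_I c_{I_0}$ for every $I$. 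You need the full family of $i\ge 2$ relations, not just $i=1$.

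Second, in the $q\le b$ case you misattribute where the two key congruences come from. The congruence $p\mid(\lambda_q+b+2-q)$ does not come from the degenerate $M(t)$-summands for rows $i\ge q+1$; it comes from $R_1(1,1)$ applied to $I=(3,4,\dots,b+2)$ after substituting the definitions of $d_I$ and $d_{I[i_u\to 2]}$ and using that $\lambda_2,\dots,\lambda_{q-1}$ have already been forced to vanish mod $p$. Conversely, the extra $p\mid 2$ (when $m-b\ge3$) does come from the rows $i\ge q+1$, but the mechanism is not ``anti-symmetry of identical exterior entries''; it is the coefficient $\binom{\lambda_{q+1}+1}{1}=\lambda_{q+1}+1=2$ arising from $R_{q+1}(1,3)$ with $q+1,q+2\notin I$. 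The verification in the converse direction of $R_1(t,1)$ when $q\le b$ is, as the paper notes, the lengthiest part and is not a telescoping argument; it requires a genuine subcase analysis according to the value of $q$ and the placement of $I$. As stated, your sketch would omit exactly these two non-telescoping steps.
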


\noindent A few remarks are in order.\\(1) In the above statement, case $ ii) $, it is understood that $R(\lambda_i, \lambda_{i+1}-1)=0$ if $q<2$. Also in $iv)$, if $u<i_{u}-2$, the corresponding product $\prod_{v=u}^{i_u-2}(-\lambda_{v+1})$ (empty product) is 1. In particular, if $I=(2,3,...,b+1)$, then $d_I=1.$\\
(2) In case $iii)$ we note that the only possible value of $p$ is 2 if $m-b \ge 3$.\\
(3) Consider  $\Hom_S(\Delta(h), \Delta(\lambda))$, where $\lambda,h \in \we^{+}(n,r)$ and $h$ is a hook. We explain here, by a well known argument (for example see \cite{Ku2}, pg. 517-518, for more details ), how Theorem 3.1 may be used for the computation of this Hom space. If $m \ge r$, then by \cite{AB}, Theorem 7.7, and contravariant duality, loc. cit,. Section 2, we have 
\begin{equation*}\Hom_{S_k(m,r)}(\Delta(h), \Delta(\lambda))=\Hom_{S_k(m,r)}(\Delta(\lambda^{'}), \Delta(h^{'})),\end{equation*} where the transpose of a partition $\mu$ is denoted by $\mu^{'}$. Since the transpose of a hook is a hook, the right hand side may be determined by Theorem 3.1 and the reductions stated before it. On the other hand, by  \cite{Gr}, (6.5g), for all $m \ge n$ we have $$\Hom_{S_k(n,r)}(\Delta(h), \Delta(\lambda))=\Hom_{S_k(m,r)}(\Delta(h), \Delta(\lambda)).$$ Hence, given $\lambda,h \in \we^{+}(n,r)$, we may choose $m$ large enough to apply the above. In fact, $m=r$ suffices.
\begin{ex} Carter and Payne \cite{CP} have shown the existence of nonzero homomorphisms between Weyl modules of highest weights that differ by a multiple of a positive root, under suitable conditions on $p$. The next example shows that not all homomorphisms of Theorem 3.1 are compositions of Carter-Payne homomorphisms.

Let $p=2, \la=(c,1^d)$ and $h=(c+2,1^{d-2})$, where $c \ge 2$ is an even integer and $d \ge3$ is an odd integer. With the notation of Theorem  3.1 iii) we have $q=1, m=d+1, b=d-2,$ and $\la_q+b+2-q=c+(d-2)+2+2-1$ is even. Thus $$\Hom_S(\Delta(\la), \Delta(h)) \neq 0.$$ The only partitions $\mu$ such that $\mu-\la$ is a multiple of a positive root are $\mu=(c+1,1^{d-1}), (c,2,1^{d-2})$. For the first of these, Theorem 3.1 iii) yields 
$$\Hom_S(\Delta(\la),\Delta(c+1,1^{d-1}))=0$$
because here $\lambda_1+(d-1)+2-1=c+d$ is odd. For the second, using row removal and Theorem 3.1 iii)  respectively we have
\begin{equation*}
\Hom_S(\Delta(\la),\Delta(c,2,1^{d-2}))=\Hom_{S^{'}}(\Delta(1^d),\Delta(2,1^{d-2}))=0
\end{equation*}
because $d$ is odd, where $S^{'}=S_k(n,d)$.

Hence for $p=2$ and the above choices of $\la, h$, no nonzero homomorphism $\Delta(\la) \to \Delta(h)$ is a composition of Carter-Payne homomorphisms.
\end{ex}

\subsection{Relationship with a result of Loubert}
As  mentioned in the Introduction, Loubert has classified in \cite{Lou} all homomorphisms between Specht modules over the full Khovanov-Lauda-Rouquier algebra when one of the two partitions is a hook. This classification is expressed via certain binomial coefficients and is valid for $e>2$, where $e$ is the least integer so that $1+q+ \cdots q^{e-1}=0$ and $q$ is the deformation parameter. Hence a corresponding classification result is obtained for Specht modules over the group algebra of the symmetric group $\mathfrak{S}_r$. By a result of \cite{CL}, Theorem 3.7, this implies a classification result for homomorphisms between Weyl modules for the general linear group when one of the two partitions is a hook and is valid when $p>2$. The sets of binomial coefficients used by Loubert are different from ours. The purpose of this subsection is to show directly the equivalence of the two results for the general linear group when $p>2$, using elementary divisibility properties of binomial coefficients.

Following \cite{Lou}, if $a=(a_1,...,a_N)$ is a partition of positive integers, the Garnir content $Gc(a) \in k$ of $a$ is defined by \[ Gc(a)=\gcd\left\{\binom{a_i}{j} :  1 \le j \le a_{i+1}-1, 1 \le i \le N-1 \right\},\] with the convention that $\gcd(\emptyset) = 0$. For a partition $\lambda$ of $r$, let $S^{\lambda}$ be the corresponding Specht module for the symmetric group $\mathfrak{S}_r$. The classification result of \cite{Lou} for the group algebra of $\mathfrak{S}_r$ is the following.
\begin{theo}[\cite{Lou}]
Let $k$ be an infinite field of characteristic $p>2$ and $\lambda,h \in \we^{+}(n,r),$ such that $\lambda=(\lambda_1,...,\lambda_m), \lambda_m \neq 0, h=(a,1^b)$ and $m \ge b+1$. Then $\dim \operatorname{Hom}_{\mathfrak{S}_r}(S^{\lambda}, S^{h}) \le 1$ and is equal to 1 if and only if one of the following conditions holds:

(i) There exist $n' \in \{1,...,b+1\}, a \in (\mathbb{Z}_{>0})^{n'}$, and $0 \le m' < p$ such that $Gc(a)=0$ and \[\lambda = (a_1p,...,a_{{n'}-1}p, a_{n'}p-m', 1^{b-{n'}+1} ).\]

(ii) $p$ divides $r$ and there exist $n' \in \{1,...,b\}, a \in (\mathbb{Z}_{>0})^{n'}$, and $0 \le m' < p$ such that $Gc(a)=0$ and \[\lambda = (a_1p,...,a_{n'-1}p, a_{n'}p-m',1^{b-{n'}+2} ).\]

(iii) There exist $n' > b+1, a \in (\mathbb{Z}_{>0})^{n'}$, and $0 \le m' < p$ such that $Gc(a)=0$ and \[\lambda = (a_1p,...,a_bp, a_{b+1}p-1,...,a_{n'-1}p-1, a_{n'}p-1-m').\]
\end{theo}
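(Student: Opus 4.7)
The plan is to deduce Theorem 3.2 from Theorem 3.1 by combining the Carter--Lusztig identification $\Hom_{\mathfrak{S}_r}(S^{\la},S^{h}) \cong \Hom_S(\Delta(\la),\Delta(h))$, valid for $p>2$ (\cite{CL}, Theorem 3.7), with an elementary translation between the two sets of binomial-coefficient conditions. Once the Hom spaces are matched this way, it remains to verify that Loubert's conditions (i)--(iii) are logically equivalent to the disjunction of the arithmetic conditions of Theorem 3.1, supplemented in the boundary regime $m=b+1$ by the column-removal reduction recalled at the start of Section 3.1.

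The first step is to convert both characterizations into statements about base-$p$ expansions. Using Kummer's theorem and the recursion $p\mid R(x,y) \iff p\mid x \text{ and } p\mid R(x/p,\lfloor y/p\rfloor)$, one shows that $p\mid R(x,y)$ is equivalent to $p^{k}\mid x$, where $k$ is the least positive integer with $p^{k}>y$. By Lucas's theorem, $p$ divides every $\binom{a}{j}$ with $1\le j\le z$ if and only if $p^{\ell}\mid a$ for the least $\ell$ with $p^{\ell}>z$. Hence Loubert's hypothesis $Gc(a)=0$ acquires exactly the same $p$-adic shape as the conditions $p\mid R(\la_i,\la_{i+1}-1)$ appearing in Theorem 3.1.

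The second step is a case split under the standing assumption $p>2$. In Theorem 3.1(iii) the condition ``$p\mid 2$ if $m-b\ge 3$'' forces $m\le b+2$; in Theorem 3.1(ii) the tail conditions $p\mid R(\la_i+1,\la_{i+1})$ with $\la_i=\la_{i+1}=1$ likewise force $q=m$, i.e.\ all parts of $\la$ are at least $2$. Together with the a priori assumption $m\ge b+2$, this leaves three surviving regimes: $q=m>b+1$, $m=b+2$ with two trailing ones, and $m=b+1$. These match respectively Loubert's cases (iii), (ii), (i). In each regime one parametrizes the parts of $\la$ as $a_i p$ for $i\le b$ and $a_i p - 1$ for $i>b$, with the last nonzero entry absorbing the offset $m'$ satisfying $0\le m'<p$, and applies the dictionary of Step 1 to see that the divisibilities of Theorem 3.1 correspond exactly to the Garnir vanishing $Gc(a)=0$. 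The boundary regime $m=b+1$ requires first invoking the column-removal reduction to pass to a smaller Schur algebra with single-row target, after which the same dictionary applies.

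The main obstacle will be the careful bookkeeping at the transition index $i=b$, where the divisibility conditions change in form from $p^{k_i}\mid\la_i$ to $p^{k_i}\mid\la_i+1$, and in checking that the integer $\la_q+b+2-q$ appearing in Theorem 3.1(iii) encodes precisely the hypothesis $p\mid r$ in Loubert's case (ii); this last point follows from a short computation using $r=\sum\la_i$ and the parametrization above, but requires splitting on whether the entry $a_{n'}p-m'$ is equal to $1$ or $\ge 2$. Once the dictionary of Step 1 is in place, these verifications are elementary arithmetic checks.
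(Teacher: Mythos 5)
Your overall plan coincides with what the paper does in subsection 3.2: transfer from Specht to Weyl modules via Carter--Lusztig (\cite{CL}, Theorem 3.7, valid for $p>2$), convert the two families of binomial conditions to base-$p$ divisibility constraints, and match regimes. Your ``dictionary'' via Kummer/Lucas is mathematically the same as the paper's Lemma 3.4(ii) (Vandermonde plus Corollary 22.5 of \cite{Jam}), so the technical tooling is not genuinely different.

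However, one of your structural claims is wrong, and it matters. You assert that in Theorem 3.1(ii) the tail conditions $p\mid R(\la_i+1,\la_{i+1})$ force $q=m$, i.e.\ no part of $\la$ equals $1$. That is false. Two consecutive parts equal to $1$ are indeed excluded for $p>2$ (since $R(2,1)=2$), so $q\ge m-1$; but $q=m-1$ is perfectly allowed, and the condition in that case is $p\mid R(\la_{m-1}+1,1)=\la_{m-1}+1$, which is a genuine, satisfiable constraint. This possibility is precisely Loubert's case (iii) with $a_{n'}p-1-m'=1$ (i.e.\ $a_{n'}=1$, $m'=p-2$). If you restrict to $q=m$ you lose this subcase and the ``$\Leftarrow$'' direction (Loubert (iii) $\Rightarrow$ Theorem 3.1(ii)) will fail. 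Correspondingly, your remark at the end that the $1$-versus-$\ge 2$ split for $a_{n'}p-m'$ only affects the verification of ``$p\mid r$'' undersells the bookkeeping: the paper's forward argument for Loubert (ii) explicitly runs two separate computations according to whether $q=n'-1$ or $q=n'$, because the indexing of the conditions $R(\la_i,\la_{i+1}-1)$ and the Garnir-content constraints shifts by one between the two; the same care is needed for the last entry in Loubert (iii). These are localized defects in an otherwise aligned proof sketch, but as stated the regime decomposition is incomplete and one implication would not go through.
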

We note that in case (i) of Loubert's result, both partitions $\lambda$ and $h$ have $b+1$ rows and thus we may apply column removal to $\dim\Hom_S(\Delta(\lambda),\Delta(h))$, which was done at the beginning of subsection 3.1. We intend to show directly the equivalence of conditions (ii) (respectivley, (iii)) of Theorem 3.3 with the conditions iii) (respectively, ii)) of Theorem 3.1 when $p>2$. For this we will need some elementary results concerning binomial coefficients that we now describe.

If $y$ is a positive integer, we denote by $l_p(y)$ the least integer $i$ such that $p^i > y$. For further use, we note that if $y \ge2$ and $m'$ is an integer such that $0 \le m' < p$, then \begin{equation} l_p(yp-1-m') = 1+l_p(y-1).\end{equation}
Indeed, if $p^i > y-1,$ then $p^{i+1}>yp-p$ and thus $p^{i+1} \ge yp> yp-1-m'$. Conversely, if $p^{i+1} > yp-1-m'$, then $p^{i} >  y-\frac{1+m'}{p} \ge y-1.$
\begin{lm} Let $x \ge y$ be positive integers. Then the following hold.
	\begin{enumerate}
		\item[i)] $\gcd \left \{ \binom{x}{1}, \binom{x+1}{2}, ..., \binom{x+y-1}{y}\right \} = \gcd \left \{ \binom{x}{1}, \binom{x}{2}, ..., \binom{x}{y}\right \}.$
		\item[ii)] $p$ divides the above $\gcd$ if and only $p^{l_p(y)}$ divides $x.$
		\item[iii)] Suppose $y \ge2$ and $r$ is an integer such that $0 \le r <p$. Then $p$ divides $\gcd\left \{ \binom{xp}{1}, \binom{xp}{2}, ... \binom{xp}{yp-1-r} \right \}$ if and only if $p$ divides $\gcd\left \{ \binom{x}{1}, \binom{x}{2}, ... , \binom{x}{y-1} \right \}$.
	\end{enumerate}
		\begin{proof} Part  i)  of the lemma follows from Vandermonde's identity \[\binom{x+c-1}{c}=\binom{x}{c} + \sum_{i<c}\binom{x}{i} \binom{c-1}{c-i},\] for $c=2,...,y$.
		
Part ii) is Corollary 22.5 of \cite{Jam} and part iii) follows from part ii) and equation (1). \end{proof}

\end{lm}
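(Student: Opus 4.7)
The plan is to handle the three parts in sequence. Parts (i) and (iii) are elementary manipulations, while part (ii) is a classical consequence of Kummer's theorem (it is in fact Corollary 22.5 of James); for (ii) I would simply cite that reference, or, if a self-contained argument is preferred, give a short base-$p$ digit argument.

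For (i), the strategy is to move between $\binom{x+c-1}{c}$ and $\binom{x}{c}$ via Vandermonde's convolution
\[
\binom{x+c-1}{c} \;=\; \binom{x}{c} + \sum_{i=1}^{c-1}\binom{x}{i}\binom{c-1}{c-i},
\]
valid for $1 \le c \le y$ (the $i=0$ term drops because $\binom{c-1}{c}=0$). The identity isolates $\binom{x}{c}$ modulo integer multiples of $\binom{x}{1},\ldots,\binom{x}{c-1}$, and running it in either direction and iterating on $c$ expresses each generating set as a $\Z$-linear combination of the other. The two gcds therefore coincide.

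For (ii), I would invoke Kummer's theorem: $v_p(\binom{x}{j})$ equals the number of carries in the base-$p$ addition $j + (x-j) = x$. If $p^{l_p(y)} \mid x$, then for every $1 \le j \le y < p^{l_p(y)}$ every nonzero digit of $j$ sits strictly below the lowest nonzero digit of $x$, forcing at least one carry, so $p \mid \binom{x}{j}$. Conversely, if $p^{l_p(y)} \nmid x$, let $p^i$ be the lowest nonzero power of $p$ occurring in the base-$p$ expansion of $x$; then $i < l_p(y)$, so $j := p^i \le p^{l_p(y)-1} \le y$ is a valid index, and the addition $p^i + (x-p^i) = x$ is carry-free, so $\binom{x}{j}$ is coprime to $p$.

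For (iii), part (ii) reduces the left-hand divisibility to $p^{l_p(yp-1-r)} \mid xp$ and the right-hand one to $p^{l_p(y-1)} \mid x$; these are equivalent by equation (1), which says $l_p(yp-1-r) = 1 + l_p(y-1)$. The only real obstacle is packaging the Kummer argument in (ii) cleanly if one does not cite James; everything else is bookkeeping.
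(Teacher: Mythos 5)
Your proof is correct and follows essentially the same route as the paper: the Vandermonde identity for part (i), an appeal to James's Corollary 22.5 for part (ii), and the reduction of (iii) to (ii) via equation (1). The optional self-contained Kummer's-theorem argument you sketch for (ii) is also sound, but the paper simply cites James.
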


Suppose conditions (ii) of Theorem 3.3 hold, so that \begin{equation}\lambda = (\lambda_1, ..., \lambda_{b+2})=(a_1p,...,a_{n'-1}p, a_{n'}p-m',1^{b-{n'}+2} ).\end{equation} We will show that conditions iii) of Theorem 3.1 hold. 

First we note that the number of rows of $\lambda$ is $m=b+2$ and hence the third condition in Theorem 3.1 iii) is empty.

Recall the definition $q=\max\{i:\lambda_i \ge 2\}$ stated before Theorem 3.1. We thus  have two possibilities, 
$q=n'-1$ or $q=n'$.

Let $q=n'-1$. Hence $a_{n'}p-m'=1$ and thus $a_{n'}=1$. From Gc($ a $)=0 we obtain that $p$ divides each of \[ \binom{a_i}{1},\binom{a_i}{2},...,\binom{a_i}{a_{i+1}-1} \]
for $i=1,...,n'-2$. (For $i=n'-1$, there are no corresponding binomial coefficients in Gc(a) since $a_{n'}=1.$) By Lemma 3.4 iii) for $r=0$ we have that $p$ divides each of 
\[\binom{a_ip}{1},\binom{a_ip}{2},...,\binom{a_ip}{a_{i+1}p-1}, \]
that is, each of 
\[\binom{\lambda_i}{1},\binom{\lambda_i}{2},...,\binom{\lambda_i}{\lambda_{i+1}-1}. \]
By Lemma 3.4 i), we obtain that $p$ divides each of 

\[\binom{\lambda_i}{1},\binom{\lambda_i+1}{2},...,\binom{\lambda_i+\lambda_{i+1}-2} {\lambda_{i+1}-1}, \]
which is the condition $p$ divides
$R(\lambda_i,\lambda_{i+1}-1),$ for $i=1,...,q-1,$
of Theorem 3.1 iii).

We need to show the remaining condition of Theorem 3.1 iii), namely that $p$ divides $\lambda_q+b+2-q$. By assumption and equation (2) we have that $p$ divides \[r=a_1p+...+a_{n'-1}p+a_{n'}p-m'+b-n'+2\] and hence $p$ divides \begin{align*}a_{n'-1}p+(a_{n'}p-m')+b-n'+2 =  \lambda_q +b-q+2.\end{align*}

Let $q=n'$. Exactly as before we obtain from $Gc($a$)=0$ that $p$ divides
$R(\lambda_i,\lambda_{i+1}-1),$ for all $i=1,...,q-2$. However, now the assumption $Gc($a$)=0$ contains the additional statement that $p$ divides each of \[\binom{a_{q-1}}{1},\binom{a_{q-1}}{2},...,\binom{a_{q-1}}{a_{q}-1}. \]
Using Lemma 3.4 iii) for $r=m'$, this is equivalent to $p$ divides each of 
\[\binom{a_{q-1}p}{1},\binom{a_{q-1}p}{2},...,\binom{a_{q-1}p}{a_{q}p-1-m'}, \]
which by Lemma 3.4 i) is equivalent to $p$ divides $R(\lambda_{q-1},\lambda_{q}-1).$

Finally we need to show that  $p$ divides $\lambda_q+b+2-q$. This is clear since from (2) we have that $p$ divides \[r=a_1p+...+a_{q-1}p+\lambda_q+b-q+2.\] 

Conversely, suppose conditions iii) of Theorem 3.1 hold. Recall that we are assuming $p>2$, and thus from the third of the previous conditions we have $m=b+2$. Since $p$ divides each of $\lambda_1, ..., \lambda_{q-1}, \lambda_{q}+b+2-q =\lambda_q+m-q $, we conclude that $p$ divides their sum which is equal to $r$. This is the first condition of Theorem 3.3 ii). 

We let $n'=q$ and we define $m'$ by the requirements $0 \le m' <p$ and $\lambda_q \equiv -m \mod p$. Then we have \[\lambda = (a_1p,...,a_{n'-1}p, a_{n'}p-m',1^{b-n'+2}),\]
where $a_1 = \lambda_1/p,...,a_{n'-1} =\lambda_{n'-1}/p$ and $a_{n'}=(\lambda_{n'}+m')/p.$ 

It remains to be shown that Gc($a$)$=0$, where $a=(a_1,...,a_{n'})$. This follows from Lemma 3.4 i) and iii)  since we are assuming that $p$ divides each $R(\lambda_i, \lambda_{i+1}-1)$, for $i=1,...,q-1$.

We have shown the equivalence of conditions in Theorem 3.1 iii) and Theorem 3.3 (ii) under the assumption  $p>2$. The proof of the equivalence of the conditions in Theorem 3.1 ii) and Theorem 3.3 (iii)  under the same assumption is similar and thus omitted.

\section{Relations}
According to 2.3, every map of $S$-modules $D(\lambda) \to \Delta(h)$, is of the form $\phi= \sum_{I \in B(\lambda,h)}c_I\phi_I, c_I \in k$. In this section we determine equivalent conditions on $\lambda, h, c_I$ so that $\phi$ induces a map of $S$-modules $\Delta(\lambda) \to \Delta(h)$.
\begin{lm} Let $\lambda,h \in \we^{+}(n,r),$ where $\lambda=(\lambda_1,...,\lambda_m), \lambda_m \neq 0, h=(a,1^b)$, and let $\phi= \sum_{I \in B(\lambda,h)}c_I\phi_I$. Then $\phi$ induces a map of $S$ modules $\Delta(\lambda) \to \Delta(h)$ if and only if the following relations hold.
\begin{enumerate}
\item[$\mathbf{R_1(t,1):}$]  For $2 \notin I$, $t=1,...,\lambda_2,$
$$\tbinom{\lambda_1+t}{t}c_I+\tbinom{\lambda_1+t-1}{t-1} \sum_{u=1}^{b}(-1)^uc_{I[i_u \to 2]} =0, $$
where $I[i_u \to 2]=(2<i_1 < \cdots <\widehat{i}_u < \cdots <i_b)$ and $\widehat{i}_u$ means that $i_u$ is omitted.

\item[$\mathbf{R_1(t,2):}$]  For $2 \in I$, $t=1,...,\lambda_2-1,$
$$\tbinom{\lambda_1+t-1}{t}c_I=0. $$

\item[$\mathbf{R_i(t,1):}$] For $i \ge 2,  i \in I, i+1 \notin I, t=1,...,\lambda_{i+1},$
$$\tbinom{\lambda_i+t-1}{t}c_I+\tbinom{\lambda_i+t-1}{t-1} c_{I[i \to i+1]} =0, $$
where $I[i \to i+1]=(i_1 < \cdots <i+1 < \cdots <i_b)$ if $I=(i_1 < \cdots <i < \cdots <i_b)$ and $i+1 \notin I.$

\item[$\mathbf{R_i(t,2):}$] For $i \ge 2, i \notin I$, $i+1 \in I$,  $t=1,...,\lambda_{i+1}-1,$
$$\tbinom{\lambda_i+t}{t}c_I=0. $$

\item[$\mathbf{R_i(t,3):}$] For $i \ge 2, i \notin I, i+1 \notin I$,  $t=1,...,\lambda_{i+1},$
$$\tbinom{\lambda_i+t}{t}c_I=0. $$

\item[$\mathbf{R_i(t,4):}$] For $i \ge 2, i \in I, i+1 \in I$,  $t=1,...,\lambda_{i+1}-1,$
$$\tbinom{\lambda_i+t-1}{t}c_I=0. $$
\end{enumerate}
\end{lm}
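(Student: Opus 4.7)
The plan is to apply the ABW exact sequence of 2.2: $\phi$ descends to a map $\Delta(\la) \to \Delta(h)$ if and only if $\phi \circ \square = 0$; equivalently, for each $i \in \{1,\ldots,m-1\}$ and $t \in \{1,\ldots,\la_{i+1}\}$ the composite $\phi \circ \square|_{M_i(t)}$ vanishes on the $S$-module generator
\[
v_i(t) = e_1^{(\la_1)} \otimes \cdots \otimes e_i^{(\la_i+t)} \otimes e_{i+1}^{(\la_{i+1}-t)} \otimes \cdots \otimes e_m^{(\la_m)}
\]
of $M_i(t) = D(\la_1,\ldots,\la_i+t,\la_{i+1}-t,\ldots,\la_m)$. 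A direct Hopf-algebra calculation in $DV$ gives $\square(v_i(t)) = Z_i(t)$, where $Z_i(t) \in D(\la)$ is the standard weight generator of $D(\la)$ modified only at position $i+1$, which now holds the monomial $e_i^{(t)} e_{i+1}^{(\la_{i+1}-t)} \in D_{\la_{i+1}}$. No binomial coefficient arises here because the $D_{\la_i}\otimes D_t$-component of $\Delta(e_i^{(\la_i+t)})$ is just $e_i^{(\la_i)} \otimes e_i^{(t)}$ and $e_i, e_{i+1}$ are independent generators of the commutative algebra $DV$.

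For each $I = (j_1 < \cdots < j_b) \in B(\la,h)$ I would next compute $\phi_I(Z_i(t)) \in \Delta(h)$ by comultiplying at every position in $I$, rearranging so the $D_1$-factors come last, multiplying the ``big'' factors together inside $D_a$, and applying $d'_h$. Away from positions $i$ and $i+1$ the procedure is routine; every binomial coefficient in the statement ultimately comes from the multiplication $e_i^{(\la_i-[i\in I])} \cdot e_i^{(t)} = \tbinom{\la_i+t-[i\in I]}{t}\, e_i^{(\la_i+t-[i\in I])}$ happening inside $D_a$. Whenever $i+1 \in I$, applying $\Delta(xy)=\Delta(x)\Delta(y)$ to the special factor at position $i+1$ and projecting onto $D_{\la_{i+1}-1} \otimes D_1$ produces two summands: ``Term~A'' with small part $e_i$ and big part $e_i^{(t-1)} e_{i+1}^{(\la_{i+1}-t)}$ (nonzero iff $t \ge 1$) and ``Term~B'' with small part $e_{i+1}$ and big part $e_i^{(t)} e_{i+1}^{(\la_{i+1}-t-1)}$ (nonzero iff $t \le \la_{i+1}-1$); when $i, i+1 \in I$ simultaneously, Term~A puts two copies of $e_i$ in the first column and vanishes by skew-symmetry. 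These nonvanishing constraints on $t$ match the ranges in the statement exactly.

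The plan is then to index the standard tableaux of $\Delta(h)$ of weight $(\la_1,\ldots,\la_i+t,\la_{i+1}-t,\ldots,\la_m)$ by their first column $J$ and, for each $J$, to compute the coefficient of $T_J$ in $\sum_I c_I \phi_I(Z_i(t))$ and set it to zero. For $i \ge 2$ the image tableaux are automatically standard; sorting $J$ according to whether $i, i+1 \in J$ produces the four relations $\mathbf{R_i(t,3)},\mathbf{R_i(t,1)},\mathbf{R_i(t,2)},\mathbf{R_i(t,4)}$. Three of them are ``pure'', receiving a contribution only from $\phi_J$; the sub-case $i \in J, i+1 \notin J$ instead receives a second contribution from Term~A of $\phi_{J[i \to i+1]}$ (whose small parts are $e_i$ together with the $e_{j_s}$ for $j_s \ne i+1$, yielding the sorted column $J$), and summing these gives $\mathbf{R_i(t,1)}$. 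For $i=1$, since $1 \notin I$ always, the analogous Term~A contribution from $\phi_{I'}$ with $2 \in I'$ produces a tableau whose first column begins with $1$ --- non-standard --- and must be rewritten via the straightening law of 2.2: the resulting signed sum over top-row entries $i_s \ge 2$ splits into contributions to standard tableaux of type $2 \notin J$ (the terms $i_s \ge 3$; matching $I' = J[i_u \to 2]$ and combining the straightening sign with the column-sort sign $(-1)^{u-1}$ yields $(-1)^u$, recovering $\mathbf{R_1(t,1)}$) and to the type $2 \in J$ tableau (the single term $i_s = 2$; together with Term~B of $\phi_J$ and the Pascal identity $\tbinom{\la_1+t}{t} - \tbinom{\la_1+t-1}{t-1} = \tbinom{\la_1+t-1}{t}$ this produces $\mathbf{R_1(t,2)}$). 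The main technical obstacle is precisely this straightening-and-sign bookkeeping in the $i=1$ case; once it is carried out, each of the six equations is obtained as the vanishing of the coefficient of a specific standard basis element.
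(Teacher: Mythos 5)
Your proposal is correct and follows essentially the same route as the paper's own proof: you identify $\square(v_i(t))$ with the paper's $X_t$ (for $i=1$) or $Y_t$ (for $i\ge2$), compute $\phi$ on these generators, observe that for $i\ge2$ all image tableaux come out standard whereas for $i=1$ the ``Term~A'' contributions from $\phi_J$ with $2\in J$ place a $1$ in the leg and must be straightened, and then read off the six relations (with the Pascal identity $\tbinom{\la_1+t}{t}-\tbinom{\la_1+t-1}{t-1}=\tbinom{\la_1+t-1}{t}$ appearing exactly where the paper uses it for $\mathbf{R_1(t,2)}$) by equating coefficients of the standard basis to zero. The sign bookkeeping you describe for $\mathbf{R_1(t,1)}$ (straightening sign times column-sort sign $(-1)^{u-1}$ giving $(-1)^u$) matches the paper's computation of the coefficient of $T_{t,I}$.
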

\begin{proof}

By 2.2, $\phi$ induces a map of $S$-modules $\Delta(\lambda) \to \Delta(h)$ if and only if $\phi(Im\square)=0.$ First we compute the image under $\phi$ of the relations coming from rows 1 and 2 of $\lambda$. Let \[
X_t=1^{(\lambda_1)}\otimes1^{(t)}2^{(\lambda_2-t)}\otimes\cdots\otimes m^{(\lambda_m)} \in Im(\square), \;\; t=1,...,\lambda_2.
\]
Then $\phi(X_t)=A_t+B_t$, where $A_t=\sum_{2 \in J}c_J\phi_J(X_t)$ and $B_t=\sum_{2 \notin J}c_J\phi_J(X_t).$ If $J=(i_1,...,i_b), 1 \le i_s \le n,$ we denote by $T_{t,J} \in \Delta(h)$ the unique row standard tableau of $\Delta(h)$ of weight $(\lambda_1+t,\lambda_2-t,\lambda_3,...,\lambda_m)$ with leg $i_1...i_b$. Then from the definition of $\phi_J$ we obtain
\begin{align}
&A_t=\sum_{2 \in J}c_J \tbinom{\lambda_1+t-1}{t-1}T_{t,J(2 \to 1)}+\sum_{2 \in J} c_J\tbinom{\lambda_1+t}{t} T_{t,J}, \; \; t=1,...,\lambda_2-1,\\&A_t=\sum_{2 \in J}c_J \tbinom{\lambda_1+t-1}{t-1}T_{t,J(2 \to 1)}, \; \; t=\lambda_2,
\end{align}
where $J(2\to v)=(v,j_2,...,j_b)$ if $J=(2<j_2<...<j_b) \in B(\lambda, h)$. Using the straightening law in (3), rearrangement of terms and the identity $-\tbinom{a+b-1}{b-1} + \tbinom{a+b}{b}=\tbinom{a+b-1}{b}$ we have 
\begin{align*}
A_t=&\sum_{2 \in J}c_j \tbinom{\lambda_1+t-1}{t-1}(-\sum_{v \ge 2}T_{t,J(2 \to v)})+\sum_{2 \in J} c_J\tbinom{\lambda_1+t}{t} T_{t,J}\\
=&\sum_{2 \in J}\left( -c_J \tbinom{\lambda_1+t-1}{t-1}+c_J \tbinom{\lambda_1+t}{t}\right)T_{t,J(2 \to 2)} \\ &+\sum_{v \ge 3}\sum_{2 \in J}(-1)c_J\tbinom{\lambda_1+t-1}{t-1}T_{t,J(2 \to v)} \\
=&\sum_{2 \in J}(-c_J) \tbinom{\lambda_1+t-1}{t}T_{t,J}+\sum_{v \ge 3} \sum_{2 \in J}(-c_J)\tbinom{\lambda_1+t-1}{t-1}T_{t,J(2\to v)}.
\end{align*}
Let \[
C=\sum_{v\ge 3} \sum_{2 \in J}(-c_J)\tbinom{\lambda_1+t-1}{t-1}T_{t,J(2\to v)},
\]
and fix $I=(i_1<i_2<...<i_b), \; 3 \le i_1 < ... < i_b \le m.$ Then the coefficient of $T_{t,J}$ in $C$ is equal to \[
-\tbinom{\lambda_1+t-1}{t-1}\sum_{u=1}^{b}(-1)^{u+1}c_{I[i_u \to 2]},
\]
because of skew-symmetry in the leg. Hence
\begin{align}
\nonumber A_t=&\sum_{2 \in J}(-c_J) \tbinom{\lambda_1+t-1}{t}T_{t,J}\\ &-
\tbinom{\lambda_1+t-1}{t-1}\sum_{\substack{I=(i_1<...<i_b) \\3 \le i_1}}\left( \sum_{u=1}^{b} (-1)^{u+1}c_{I[i_u \to 2]}\right)T_{t,I} ,\end{align}
which is valid for $t=1,...,\lambda_2-1.$ For $t=\lambda_2,$ a similar computation from (4) yields 
\begin{align}A_{\lambda_2}=\tbinom{\lambda_1+\lambda_2-1}{\lambda_2-1}\sum_{\substack{I=(i_1<...<i_b) \\3 \le i_1}}\left( \sum_{u=1}^{b} (-1)^{u+1}c_{I[i_u \to 2]}\right)T_{\lambda_2,I}.\end{align}

For $B_t$, the definition of $\phi_J$ yields 
\begin{align}
B_t=\sum_{2 \notin I}c_I \tbinom{\lambda_1+t}{t}T_{t,I}, \; t=1,...,\lambda_2.
\end{align}
From (5) and (7) we have for each $t=1,...,\lambda_2-1,$
\begin{align}
\nonumber \phi(X_t)=&-\sum_{2 \in J}c_J \tbinom{\lambda_1+t-1}{t}T_{t,J}\\ &+
\sum_{2 \notin I}\Big( -\tbinom{\lambda_1+t-1}{t-1}\sum_{u=1}^{b}(-1)^{u+1}  c_{I[i_u \to 2]} + \tbinom{\lambda_1+t}{t}c_I\Big)T_{t,I}
\end{align}
and from (6) and (7)
\begin{align}\phi(X_{\lambda_2})=\sum_{2 \notin I}\Big( -\tbinom{\lambda_1+\lambda_2-1}{\lambda_2-1}\sum_{u=1}^{b}(-1)^{u+1}  c_{I[i_u \to 2]} + \tbinom{\lambda_1+\lambda_2}{\lambda_2}c_I\Big)T_{\lambda_2,I}.
\end{align}
Since all the tableaux appearing in the right hand side of (8) are standard and distinct, and likewise for (9), we conclude that if $\phi$ induces a map of $S$-modules $\Delta(\lambda) \to \Delta(h)$, then the relations $R_1(t,1)$ and $R_1(t,2)$ hold.

Next we compute the image under $\phi$ of the relations coming from rows $i$ and $i+1$ of $\Delta(\lambda)$, $i=2,...,m-1$. Let \[
Y_t=1^{(\lambda_1)}\otimes ... \otimes i^{(\lambda_i)}\otimes i^{(t)}(i+1)^{(\lambda_{i+1}-t)}\otimes ... \otimes m^{(\lambda_m)} \in Im(\square),
\]
$t=1,...,\lambda_{i+1}$. Then $\phi(Y_t)=\sum_{J}c_J\phi_J(Y_t)=A_t+B_t+C_t+D_t,$ where $A_t, B_t, C_t, D_t$ correspond to the summands such that
\begin{align*}
&i \in J,\; i+1 \notin J, \\
&i \notin J,\; i \in J,\\
&i, \; i+1 \notin J, \\
&i, \; i+1 \in J,
\end{align*}
respectively. For $t=1,...,\lambda_i-1$ the definition of $\phi_J$ yields 
\begin{align*}
& A_t=\sum_{\substack{i \in J \\ i+1 \notin J}}c_J \tbinom{\lambda_i+t-1}{t}T_{t,J},\\
&B_t=\sum_{\substack{i \notin J \\ i+1 \notin J}}c_J \tbinom{\lambda_i+t-1}{t-1}T_{t,J[i+1 \to i]} + \sum_{\substack{i \notin J \\ i+1 \in J}}c_J \tbinom{\lambda_i+t}{t}T_{t,J},\\
&C_t=\sum_{\substack{i \notin J \\ i+1 \notin J}}c_J \tbinom{\lambda_i+t}{t}T_{t,J},, \\
&D_t=\sum_{\substack{i \in J \\ i+1 \in J}}c_J \tbinom{\lambda_i+t-1}{t-1}T_{t,J}.
\end{align*}
Noting that $T_{t,J}$, where $i \in J$ and $i+1 \notin J$, appears in both $A_t$ and $B_t$, we obtain
\begin{align*}
\phi(Y_t)=&\sum_{\substack{i \in J \\ i+1 \notin J}}\Big(c_J \tbinom{\lambda_i+t-1}{t}+c_{J[i \to i+1]} \tbinom{\lambda_i+t-1}{t-1}\Big)T_{t,J} \\
&+\sum_{\substack{i \notin J \\ i+1 \in J}}c_J \tbinom{\lambda_i+t}{t}T_{t,J}  +
\sum_{\substack{i \notin J \\ i+1 \notin J}}c_J \tbinom{\lambda_i+t}{t}T_{t,J}  + 
\sum_{\substack{i \in J \\ i+1 \in J}}c_J \tbinom{\lambda_i+t-1}{t-1}T_{t,J},
\end{align*}
valid for $t=1,...,\lambda_{i+1}-1.$ A similar computation for $t=\lambda_{i+1}$ gives 
\begin{align*}
\phi(Y_{\lambda_{i+1}})=&\sum_{\substack{i \in J \\ i+1 \notin J}}\Big(c_J \tbinom{\lambda_i+\lambda_{i+1}-1}{\lambda_{i+1}}+c_{J[i \to i+1]} \tbinom{\lambda_i+\lambda_{i+1}-1}{\lambda_{i+1}-1}\Big)T_{t,J}  \\
&+\sum_{\substack{i \notin J \\ i+1 \notin J}}c_J \tbinom{\lambda_i+\lambda_{i+1}}{\lambda_{i+1}}T_{t,J}.
\end{align*}
Since all the tableaux appearing in the right hand side of the last equation are standard and distinct, and likewise for the penultimate equation, we conclude that if $\phi$ induces a map of $S$-modules $\Delta(\lambda) \to \Delta(h)$, then the relations $R_i(t,j), j=1,2,3,4,$ hold.

Conversely, it is clear from the above that if the relations of the Lemma are satisfied, then $\phi(Im(\square))=0$ and hence $\phi$ induces a map of $S$-modules $\Delta(\lambda) \to \Delta(h).$
\end{proof}

\begin{cor} Suppose $\lambda,h \in \we^{+}(n,r),$ where $h=(a,1^b)$. Then  $$\dim\Hom(\Delta(\lambda), \Delta(h)) \le 1.$$ If $\Hom(\Delta(\lambda), \Delta(h)) \neq 0$, then a nonzero element of  $\Hom(\Delta(\lambda), \Delta(h))$ is the map induced by  
$\psi= \sum_{I \in B(\lambda,h)}d_I\phi_I$, where \[
d_I=\prod_{u=1}^{b} \prod_{v=u}^{i_u-2} (-\lambda_{v+1}),\; I=(i_1,...,i_b).\]

\end{cor}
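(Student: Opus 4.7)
The idea is that the family of coefficients $(c_I)_{I \in B(\lambda,h)}$ in the expression $\phi = \sum_I c_I \phi_I$ is pinned down, up to a single overall scalar, by just one very simple subfamily of the relations in Lemma 4.1, so that the remaining relations play no role in uniqueness but only decide \emph{whether} a nonzero solution exists.

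Concretely, I would use the relation $R_i(t,1)$ at $t = 1$ for $i \ge 2$: for each $I$ containing $i$ but not $i+1$ this reads $\lambda_i\, c_I + c_{I[i \to i+1]} = 0$, i.e.
$$c_{I[i \to i+1]} = -\lambda_i\, c_I.$$
The first step is to verify that the proposed scalars $d_I$ obey exactly this transport rule. Writing $I' = I[i_u \to i_u+1]$, only the $u$-th inner factor of the double product defining $d_I$ changes, acquiring one extra term $-\lambda_{i_u}$, so $d_{I'} = -\lambda_{i_u}\, d_I$. At the base tuple $I_0 = (2, 3, \ldots, b+1)$ each inner product $\prod_{v=u}^{i_u-2}(-\lambda_{v+1})$ is empty, so $d_{I_0} = 1$.

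The second step is to show that every $I = (i_1 < \cdots < i_b) \in B(\lambda,h)$ is reachable from $I_0$ by a sequence of legal transports, by sliding entries right to left: first move $i_b$ from $b+1$ up to its target (legal because no other entry lies beyond position $b$), then move $i_{b-1}$ from $b$ to its target (each intermediate move $p \to p+1$ is legal since $p+1 \le i_{b-1} < i_b$), and so on. Since $(c_I)$ and $(d_I\, c_{I_0})$ satisfy the same transport rule with the same value at $I_0$, induction on the length of such a sequence yields $c_I = d_I\, c_{I_0}$ for every $I$.

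This gives $\dim \Hom_S(\Delta(\lambda), \Delta(h)) \le 1$ at once; when the Hom space is nonzero one must have $c_{I_0} \ne 0$, and after normalizing $c_{I_0} = 1$ the unique generator up to scalar is the map induced by the $\psi$ displayed in the statement. I do not foresee a serious obstacle: only the $t = 1$ instance of a single family of relations is needed for this argument, while the remaining relations of Lemma 4.1 impose no further constraint on the ratios $c_I / c_{I_0}$ — they are exactly the relations that Theorem 3.1 reformulates as divisibility conditions on the binomial data.
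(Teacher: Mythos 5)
Your proposal is correct and is essentially the paper's own argument: both single out the relations $R_i(1,1)$ (the $t=1$ instance), establish the transport rule $c_{I[i\to i+1]}=-\lambda_i c_I$ which $d_I$ satisfies with $d_{I_0}=1$, and propagate from $I_0=(2,\dots,b+1)$ to arbitrary $I$ by induction to get $c_I = d_I c_{I_0}$. The only cosmetic difference is that the paper inducts on the lexicographic order (for $I>I_0$ it locates a predecessor $J<I$ with $I=J[i\to i+1]$), whereas you build an explicit chain of moves sliding each $i_u$ rightward; these are two ways of organizing the same induction.
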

\begin{proof}
	Suppose $\Hom_S(\Delta(\lambda),\Delta(h) ) \neq 0$, i.e., $\phi=\sum_{I}c_I\phi_I:D(\lambda) \to \Delta(h)$ induces a nonzero map of $S$-modules, $\Delta(\lambda) \to \Delta(h)$. Order the elements of $B(\lambda,h)$ lexicographically. Then $I_0=(2,3,...,b+1)$ is the least element. We will show by induction on this ordering that $c_I=d_I c_{I_{0}}$. The case $I=I_0$ is immediate as $\prod_{v=u}^{i_u-2} (-\lambda_{v+1}) =1$ (empty product). Let $I \in B(\lambda,h), I>I_0$. Then there is an $i+1 \in I, i+1>2$ such $ i \notin I$. Hence there is a $J \in B(\lambda,h)$ such that $I=J[i \to i+1]$. By $R_i(1,1)$ of Lemma 4.1, we have \[c_I=(-\lambda_i)c_J.\] Since $J<I$, we have $c_J=d_Jc_{I_0},$ where $J=(j_1<...<i<...<j_b)$. Hence $c_I=(-\lambda_i)d_Jc_{I_0}=d_Ic_{I_0}.$
	
	It remains to show that, assuming  $\Hom_S(\Delta(\lambda),\Delta(h) ) \neq 0$, the map $\Delta(\lambda) \to \Delta(h)$ induced by $\psi$ is nonzero. This is immediate as the image of the canonical standard tableau
	$d'_\lambda(1^{(\lambda_1)}\otimes \cdots \otimes m^{(\lambda_m)})$ under the induced map is a linear combination of standard tableaux in $\Delta(h)$ that are distinct and the coefficient of the canonical standard tableau in $\Delta(h)$ is 1.
\end{proof}

\section{Equivalence of relations}

\subsection{One direction}Suppose the map $\psi= \sum_{I \in B(\lambda,h)}d_I\phi_I$ defined in Corollary 4.2 induces a map of $S$-modules $\Delta(\lambda) \to \Delta(h)$. We will show in this subsection that the conditions $ii)$ and $iii)$ of Theorem 3.1 are satisfied.

\noindent \textbf{Case 1.} Suppose $q \ge b+1$.

Apply $R_1(t,2)$ for $I=(2,3,...,b,b+1)$. Since $ d_I=1 $, we have $ R(\lambda_1,\lambda_{2}-1)=0 $.

Apply $R_i(t,4)$ for $I=(2,3,...,b,b+1), i=2,...,b-1,$ to obtain 
$R(\lambda_i,\lambda_{i+1}-1)=0, i=2,...,b-1.$

Next we show that $ \lambda_b=\lambda_{b+1}+1=0 $. Indeed, since $ q \ge b+1 $, we have $ \lambda_{b+1} \ge 2 $ and hence may apply $R_b(1,4)$ for $I=(2,3,...,b,b+1)$ to obtain $\lambda_b=0.$ Next, applying $R_1(1,1)$ for $I=(3,4,...,b+2)$ and using the definition of the coefficients $d_J$, we have \[
\pm (\lambda_1+1)\lambda_2\lambda_3...\lambda_{b+1} \pm \lambda_3\lambda_4...\lambda_{b+1} \pm ... +(-1)^b(-\lambda_{b+1})+(-1)^{b+1}=0.
\]
Since $ \lambda_b=0 $, we obtain $ \lambda_{b+1}+1=0 $.

Apply $R_b(t,1)$ for $I=(2,3,...,b,b+2)$ and use the previous result to obtain \[
\tbinom{\lambda_b+t-1}{t}(-\lambda_{b+1})+\tbinom{\lambda_b+t-1}{t}\lambda_b\lambda_{b+1} \Rightarrow
\tbinom{\lambda_b+t-1}{t}=0,\]
$ t=1,2,...,\lambda_{b+1} $. Hence $R(b,b+1)=0$.

It remains to be shown that $R(\lambda_i+1,\lambda_{i+1})=0, i=b+1,...,m-1.$ The cases $i=b+2,...,m-1$ follow from $R_i(t,3)$ for $I=(2,3,...,b+1)$. Finally, apply $R_i(t,1)$ for $i=b+1$ and $I=(2,3,...,b+1)$ to obtain \begin{align*}
0&=\tbinom{\lambda_{b+1}+t-1}{t}+\tbinom{\lambda_{b+1}+t-1}{t-1}(-\lambda_{b+1})  \\ &=\tbinom{\lambda_{b+1}+t-1}{t}+\tbinom{\lambda_{b+1}+t-1}{t-1} =\tbinom{\lambda_{b+1}+t}{t}.
\end{align*}

\noindent \textbf{Case 2.} Suppose $q \le b$.

Apply $R_1(t,2)$ for $I=(2,3,...,b,b+1)$. Since $ d_I=1 $, we have $ R(\lambda_1,\lambda_{2}-1)=0 $.

Apply $R_i(t,4)$ for $I=(2,3,...,b,b+1), i=2,...,b-1,$ to obtain 
$R(\lambda_i,\lambda_{i+1}-1)=0, i=2,...,b-1.$

Since $m \ge b+2$, we may consider $I=(3,4,...,b+2) \in B(\lambda,h)$ and apply $R_1(1,1)$ to obtain \begin{equation}
0=(\lambda_1+1)d_I+\sum_{u=1}^{b}(-1)^ud_{I[i_u \to 2]}.\end{equation} If $i<q$, then $\lambda_i=0$ from what we just proved. Also, if $i>q$, then $\lambda_i=1$ by the definition of $q$. 
\begin{itemize}
	\item Suppose $q \ge 3$.
Using the definition of $d_{I[i_u \to 2]}$ and $d_I$ we have from (10)
\begin{equation*}
0=(\lambda_1+1)d_I+\sum_{u=q-2}^{b}(-1)^ud_{I[i_u \to 2]}=(\lambda_1+1)d_I+(-1)^b\lambda_q+(-1)^b(b+2-q)
\end{equation*}
and $d_I=-\lambda_2d_J=0$, where $J=(2,4,5,...,b+2)$. Hence $\lambda_q+b+2-q=0$.
\item Suppose $q=2$. Then $\lambda_1=0$, $d_I=(-\lambda_2)(-1)...(-1)=(-1)^b\lambda_2$ and $d_{I[i_u \to 2]}=(-1)^{b-u}$. Hence from (10) we have $0=\lambda_2(-1)^b+b(-1)^b.$ Thus again $\lambda_q+b+2-q=0$ is satisfied.
\item Suppose $q=1$. Then (10) yields $0=(\lambda_1+1)(-1)^b+b(-1)^b.$ Thus again $\lambda_q+b+2-q=0$ is satisfied.
\end{itemize}

Finally, let $m-b \ge 3$. Consider $I=(2,3,...,\widehat{q+1}, \widehat{q+2},...,b+3) \in B(\lambda,h)$. Apply $R_{q+1}(1,3)$ to obtain $0=2d_I=2(\pm1).$
  
\subsection{Converse} Suppose conditions ii) or iii) of Theorem 3.1 hold. We will show in this subsection that the relations of Lemma 4.1 are satisfied for the map $\psi= \sum_{I \in B(\lambda,h)}d_I\phi_I$ defined in Corollary 4.2.

\noindent \textbf{Case 1.} Suppose $q \ge b+1$.

\noindent $\mathbf{R_1(t,1)}:$ Let $b=1$. The left hand side of $R_1(t,1)$ is \begin{align*}
\tbinom{\lambda_1+t}{t}d_j-\tbinom{\lambda_1+t-1}{t-1}d_2 &=\tbinom{\lambda_1+t}{t}(-\lambda_2)\cdots(-\lambda_{j-1})-\tbinom{\lambda_1+t-1}{t-1}1\\
&=\tbinom{\lambda_1+t}{t}-\tbinom{\lambda_1+t-1}{t-1}=\tbinom{\lambda_1+t-1}{t}=0,
\end{align*}
where in the second equality we used $R(\lambda_i+1,\lambda_{i+1}), i=2,...,m-1,$ so that $\lambda_i+1=0$, and in the last equality we used $R(\lambda_b,\lambda_{b+1})$ for $b=1$.

Let $b \ge 2$ and $2 \notin I.$ Then $i_1 \ge 3$ and thus $i_{b-1} \ge b+1$ if $I=(i_1,...,i_b)$. Thus $d_I$ is a multiple of $\lambda_b$ which is zero by $R(\lambda_b, \lambda_{b+1}).$ For the same reason, each summand of $\sum_{u=1}^{b}(-1)^{i_u}d_{I[i_u \to 2]}=0$ is zero, except those corresponding to $u=b-1,b.$ But these cancel out as  \[
d_{I[i_{b-1}\to 2]}=(-\lambda_{i_{b-1}})\cdots (-\lambda_{i_b-1})d_{I[i_{b}\to 2]}=1\cdots1d_{I[i_{b}\to 2]}
\]
owing to $R(\lambda_j+1,\lambda_{j+1}), j \ge b+1$. Thus $R_1(t,1)=0$.

\noindent $\mathbf{R_1(t,2)}:$ This is immediate from $R(\lambda_1,\lambda_2-1)$ if $b>1$ and from $R(\lambda_b,\lambda_{b+1})$ if $b=1$.

\noindent $\mathbf{R_i(t,1)}:$ In the left hand side of $R_i(t,1)$  we have $d_{I[i \to i+1]} = -\lambda_id_I$.

Let $i\ge b+1$. Then $-\lambda_i=1$ by $R(\lambda_i+1,\lambda_{i+1})$ and thus \[\tbinom{\lambda_i+t-1}{t}d_I+\tbinom{\lambda_i+t-1}{t-1}d_{I[i\to i+2]}=\tbinom{\lambda_i+t}{t}d_I=0
\]
by $R(\lambda_i+1,\lambda_{i+1})$.

Let $i\le b$. Since $q \ge b+1$, we have $\lambda_{i+1} \ge 2$ meaning that $R(\lambda_i,\lambda_{i+1}-1)$ is nonempty. We have $\lambda_i=0$ by $R(\lambda_i,\lambda_{i+1}-1)$ (if $i<b$) or by $R(\lambda_b,\lambda_{b+1})$ (if $i=b$). Thus the left hand side of $R_i(t,1)$ is $\tbinom{\lambda_i+t-1}{t}d_I$. If $i=b$, then $\tbinom{\lambda_i+t-1}{t}=0$ by $R(\lambda_b,\lambda_{b+1})$. If $i<b$, then $i$ cannot be in the last two positions of $I= (j_1<...<j_b) $ since $j_1 \ge 2$. The condition $i+1 \notin I$ implies that the position in $I$ to the right of $i$ is occupied by an element $\ge i+2$. Hence $d_I$ is a multiple of $\lambda_{i+1}$ by the definition of $d_I$. By $R(\lambda_{i+1},\lambda_{i+2}-1)$ (which is nonempty as $i+2 \le q$), we have $\lambda_{i+1}=0.$

\noindent $\mathbf{R_i(t,2)} $ and $\mathbf{R_i(t,3)} : $  This is immediate from $R(\lambda_i+1,\lambda_{i+1})$ if $i \ge b+1$. If $i\le b$, then since $i \notin I$, the element of $I$ in position $i-1$ is $\ge i+1$. Hence $d_I$ is a multiple of $\lambda_i$. We have $\lambda_i=0$ by $R(\lambda_i,\lambda_{i+1}-1)$ (if $i<b$) or by $R(\lambda_b,\lambda_{b+1})$ (if $i=b$).

\noindent $\mathbf{R_i(t,4)} : $ This is immediate from $R(\lambda_i,\lambda_{i+1}-1)$ if $i<b$ or from $R(\lambda_b,\lambda_{b+1})$ if $i=b$. If $i \ge b+1$, then $i+1 \ge b+2$ and since $i+1 \in I$, we have that $d_I$ is a multiple of $\lambda_i$. But $\lambda_i=0$, by $R(\lambda_i+1,\lambda_{i+1})$.

\noindent \textbf{Case 2.} Suppose $q \le b$. We assume that conditions iii) of Theorem 3.1 hold.
\noindent $\mathbf{R_1(t,2)}:$ This is empty if $q=1$, while for $q>1$ is follows immediately from $R(\lambda_1,\lambda_2-1)$.

\noindent $\mathbf{R_i(t,1)}:$ In the left hand side of $R_i(t,1)$  we have $d_{I[i \to i+1]} = -\lambda_id_I$.

Let $i\ge q$. Then $\lambda_{i+1}=1$, $t=1$ and the left hand side of $R_i(1,1)$ is $\lambda_id_I+d_{I[i\to i+2]}=0.$

Let $i \le q-1 $. Then $\lambda_i=0$ by $R(\lambda_i,\lambda_{i+1}-1)$ and thus it suffices to show that $\tbinom{\lambda_i+t-1}{t}d_I=0$. If $t \neq \lambda_{i+1}$, then $\tbinom{\lambda_i+t-1}{t}=0$ by $R(\lambda_i,\lambda_{i+1}-1)$. Suppose $t=\lambda_{i+1}$. The condition $i+1 \notin I$ implies that the position in $I$ to the right of $i$ is occupied by an element $ \ge i+2$ and  hence $ d_I$ is a multiple of $\lambda_{i+1}$. Now
\[
\tbinom{\lambda_i+\lambda_{i+1}-1}{\lambda_{i+1}}\lambda_{i+1}=(\lambda_i+\lambda_{i+1}-1)\tbinom{\lambda_i+\lambda_{i+1}-2}{\lambda_{i+1}-1} 
\]
which is zero by $R(\lambda_i,\lambda_{i+1}-1)$.

\noindent $\mathbf{R_i(t,2)}:$ If $i \ge q$, then $\lambda_{i+1}=1$ and $R_i(t,2)$ is empty. Let $i <q$. Since  $i \notin I$ and $ i+1 \in I$, we have that $d_I$ is a multiple of $\lambda_{i}$. But as $i < q$, we have $\lambda_i=0$ by $R(\lambda_i,\lambda_{i+1}-1)$.

\noindent $\mathbf{R_i(t,3)}:$ Since $i,i+1 \notin I$ we have $m \ge b+3$. Thus from conditions $iii)$ of Theorem 3.1 we have $ 2=0 $.

If $i >q$, then $ \lambda_i=1,t=1 $ and $R_i(1,3)$ holds.

If $i=q$, then $t=1$. Also, $d_I$ is a multiple of $\lambda_q$ since $q \notin I$ and  $q \le b+1$. Since $(\lambda_q+1)\lambda_q$ is a multiple of 2, we have $(\lambda_q+1)d_I =0$.

If $i<q$, then, as in the previous case,  $d_I$ is a multiple of $\lambda_i$. But $\lambda_i=0$ by $R(\lambda_i, \lambda_{i+1}-1)$.

\noindent $\mathbf{R_i(t,4)}:$ If $i \le q-1$, then $\tbinom{\lambda_i+t-1}{t}=0$ by $R(\lambda_i, \lambda_{i+1}-1)$.

If $i \ge q$, then $R_i(t,4)$ is empty.

\noindent $\mathbf{R_1(t,1)}:$ This is a somewhat lengthy verification, but elementary, as we need to consider several subcases. Suppose $I \in B(\lambda,h), 2 \notin I$ and $t=1,...,\lambda_2$. Recall we are assuming that $q \le b$. For short let $S=\sum_{u=1}^{b}(-1)^ud_{I[i_u \to 2]}$.

\noindent \textbf{Case 1.} Let $m=b+2$. Then $I$ is unique, namely $I=(3,4,...,b+2)$, $d_I=(-1)^b\lambda_2\lambda_3\cdots \lambda_{b+1}$ and 
$S=\sum_{s=3}^{b+2}(-1)^sd_{(2,3,...,\widehat{s},...,b+2)}.$

\begin{itemize}
\item Let $q=1$. Then $\lambda_2=...=\lambda_m=1$ and hence $d_I=(-1)^b$ and $S=(-1)^bb$. We have $ t=1 $ and the left hand side of $R_1(1,1)$ is \[(\lambda_1+1)d_I+S=(-1)^b(\lambda_1+b+1)=0,\]
since $\lambda_q+b-q+2=0$.

\item Let $q=2$. Then $\lambda_3=...=\lambda_m=1$ and hence $d_I=(-1)^b\lambda_2$ and $S=(-1)^bb$. The left hand side of $R_1(t,1)$ is
\[(-1)^b\tbinom{\lambda_1+t}{t}\lambda_2+(-1)^bb\tbinom{\lambda_1+t-1}{t-1}=(-1)^b\tbinom{\lambda_1+t-1}{t}\lambda_2, 
\]
since $\lambda_2+b=0$. If $t<\lambda_2$, then by $R(\lambda_1,\lambda_2-1)$ we have $\tbinom{\lambda_1+t-1}{t}=0$. If $t=\lambda_2$, we have 
\[\tbinom{\lambda_1+\lambda_2-1}{\lambda_2}\lambda_2=(\lambda_1+\lambda_2-1)\tbinom{\lambda_1+\lambda_2-2}{\lambda_2-1}=0
\]
again by $R(\lambda_1,\lambda_2-1)$.

\item Let $q \ge 3$. Then $\lambda_{q+1}=...=\lambda_m=1$ and $d_I=(-1)^b\lambda_2...\lambda_q=0$ because $\lambda_2=0$.
Also, $d_{(2,3,...,\widehat{s},...,b+2)}=0$, $ s=3,...,q-1 $, because $ \lambda_2=...=\lambda_{q+1}=0. $ Hence \begin{align*}S&=\sum_{s=q}^{b+2}(-1)^sd_{(2,3,...,\widehat{s},...,b+2)}\\&=(-1)^b\lambda_q+(-1)^b+...+(-1)^b=(-1)^b(\lambda_q+b-q+2).\end{align*}
Thus $S=0$ and the left hand side of $R_1(t,1)$ is 0.

\end{itemize}

\noindent \textbf{Case 2.} Let $m>b+2$. Then $2=0$  and we will omit the signs in the following computations. Suppose $I=(i_1,...,i_{b+2}) \in B(\lambda,h)$, $2 \notin I$. Hence $i_s \ge s+2 $ for all $s$.
\begin{itemize}
	\item Let $q=1$. Then $\lambda_2=...=\lambda_m=1$ and hence $d_I=1$ and $S=1+...+1=b$. We have $ t=1 $ and the left hand side of $R_1(1,1)$ is $(\lambda_1+1)d_I+S=\lambda_1+1+b=0$
	since $\lambda_q+b-q+2=0$.
	\item Let $q=2$. Then $\lambda_3=...=\lambda_m=1$ and hence $d_I=\lambda_2$ and $S=b$. The left hand side of $R_1(t,1)$ is
	\[\tbinom{\lambda_1+t}{t}\lambda_2+b\tbinom{\lambda_1+t-1}{t-1}=\tbinom{\lambda_1+t-1}{t}\lambda_2, 
	\]
	since $\lambda_2+b=0$. Exactly as in the case $m=b+3, q=2$ one concludes that this is 0. 
	
\item Let $q \ge 3$. Then $\lambda_{q+1}=...=\lambda_m=1$. Also $d_I$ is a multiple of $\lambda_2$ since $i_1 \ge 3$. Thus
 $d_I=0$ because of $R(\lambda_2,\lambda_2-1)$. We will show that $S=0$ and for this we consider cases.
 
 1) Let $i_{q-2}=q$. Then $d_{(2,i_1,...,\widehat{i_s},...,i_b)}=0$, $ s=1,...,q-3 $, because $ \lambda_{q-1}=0$ and at position $q-2$ of $(2,i_1,...,\widehat{i_s},...,i_b)$ there is the index $i_{q-2}=q > q-1$. Also, 
$d_{(2,i_1,...,\widehat{i_{q-2}},...,i_b)}=\lambda_q$	and for every $s=q-1,...,b$, $d_{(2,i_1,...,\widehat{i_s},...,i_b)}=1.$	Hence $S=\lambda_q+b-q+2=0.$

2) Let $i_{q-2} \ge q+1$ and $i_{q-3}=q-1$.Then $d_{(2,i_1,...,\widehat{i_s},...,i_b)}=0$, $ s=1,...,q-3$, and  $d_{(2,i_1,...,\widehat{i_s},...,i_b)}=\lambda_q$, $ s=q-2,...,b$. Hence $S=(b-q+3)\lambda_q$. Since $\lambda_q+b-q+2=0$ and $2=0$, it follows that $S=(\lambda_q+1)\lambda_q=0$.

3) Let $i_{q-3} \ge q$. Then $d_{(2,i_1,...,\widehat{i_s},...,i_b)}=0$ for all $s$ and thus $S=0$.
\end{itemize}

\section{Special case: two hooks}
In this section we work over the integral Schur algebra $S_{\Z} = S_{\mathbb{Z}}(n,r)$, where $n \ge r$. The corresponding Weyl modules will be denoted by $\Delta_{\mathbb{Z}}(\lambda)$. It is well known that if $k$ is an infinite field, then $\Delta(\lambda) = k\otimes_{\mathbb{Z}}\Delta_{\mathbb{Z}}(\lambda).$ Consider hooks $h, h(d) \in \we^+(n,r)$, where $h=(a,1^b), h(d)=(a+d,1^{b-d})$ and $d>0.$ As a corollary of Theorem 3.1, we obtain a different proof of the following result from \cite{MS}.
\begin{cor}
If $d \ge 2$, then \[\Ext^1_{\Z}(\Delta_{\Z}(h), \Delta_{\Z}(h(d)))=
\begin{cases} \Z_2, & \;r+d\;\; \mbox{odd}\\ 0, & \;r+d\;\; \mbox{even.}\end{cases}\]
\end{cor}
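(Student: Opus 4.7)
The plan is to determine $M := \Ext^1_{S_\Z}(\Delta_\Z(h), \Delta_\Z(h(d)))$ by extracting its $p$-torsion from Theorem 3.1 and then upgrading this to the full integral structure. Since $\Delta_\Q(h)$ and $\Delta_\Q(h(d))$ are non-isomorphic irreducibles over $\Q$, we have $\Hom_{S_\Z}(\Delta_\Z(h), \Delta_\Z(h(d))) = 0$ and $M$ is torsion.

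For each prime $p$, I would apply $\Hom_{S_\Z}(\Delta_\Z(h), -)$ to the short exact sequence $0 \to \Delta_\Z(h(d)) \xrightarrow{p} \Delta_\Z(h(d)) \to \mathbb{F}_p\otimes_\Z \Delta_\Z(h(d)) \to 0$. Since $\Delta_\Z(h)$ admits a projective $S_\Z$-resolution by $\Z$-free modules, base change gives $\Ext^\bullet_{S_\Z}(\Delta_\Z(h), \mathbb{F}_p\otimes \Delta_\Z(h(d))) \cong \Ext^\bullet_{S_{\mathbb{F}_p}}(\Delta(h), \Delta(h(d)))$, and combined with the Hom vanishing above the long exact sequence yields
\[M[p] \cong \Hom_{S_{\mathbb{F}_p}}(\Delta(h), \Delta(h(d))).\]
Now apply Theorem 3.1 with $\lambda = h = (a,1^b)$ and target hook $h(d) = (a+d,1^{b-d})$: here $m=b+1$, target parameter $b' = b-d$, $q\in\{0,1\}$, and $m-b' = d+1 \geq 3$. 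For $b>d$ we fall under case (iii): the $R$-conditions are empty, the ``divides $2$'' condition forces $p=2$, and the divisibility $p\mid \lambda_q+b'+2-q$ reduces in both subcases $a=1$ and $a\geq 2$ to the single parity condition $r+d$ odd (using $r=a+b$). The edge case $b=d$ (where $h(d)=(r)$) falls under (ii) and a direct check yields the same condition. Hence $M[p]=0$ for $p>2$, and $\dim_{\mathbb{F}_2}M[2] = 1$ if $r+d$ is odd, $0$ otherwise.

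Consequently $M$ is a cyclic $2$-group, with $M=0$ when $r+d$ is even. When $r+d$ is odd, $M \cong \Z/2^k$ with $k\geq 1$, and I would finish by producing an extension of order exactly $2$. The integral map $\psi = \sum_I d_I \phi_I$ of Corollary 4.2 has $d_I \in \Z$ and so is defined over $\Z$; over $\mathbb{F}_2$ it factors through $\Delta(h)$, while over $\Z$ it does not. The images $\psi(\square(X_t))$ and $\psi(\square(Y_t))$ of the relation generators (from the proof of Lemma 4.1) lie in $2\Delta_\Z(h(d))$ by the binomial-coefficient identities of Section 5, and a $2$-adic refinement of those same identities shows they do not lie in $4\Delta_\Z(h(d))$, producing a cocycle of order exactly $2$ in $M$ and forcing $k=1$.

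The main obstacle is this last step. The identification $M[p] \cong \Hom_{S_{\mathbb{F}_p}}(\Delta(h),\Delta(h(d)))$ determines only the $p$-socle of $M$, so ruling out $M = \Z/4$ or higher requires an integral refinement of the calculations in Sections 4--5 that tracks the precise $2$-adic valuation of the obstruction to $\psi$ factoring through $\Delta_\Z(h)$, rather than merely its vanishing modulo $2$. This is where the bulk of the remaining work lies, and it is the place where the explicit formula for $\psi$ in part (iv) of Theorem 3.1 becomes essential.
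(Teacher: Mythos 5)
Your reduction of the problem to two steps --- first showing $M[p] \cong \Hom_{S_{\mathbb{F}_p}}(\Delta(h),\Delta(h(d)))$ via base change (which is equivalent to the paper's use of the universal coefficient theorem from \cite{AB}), and then applying Theorem~3.1 to conclude $M[p]=0$ for $p>2$ and $\dim_{\mathbb{F}_2} M[2] \le 1$ --- is correct, and your case analysis of the hook-into-hook conditions (reducing $\lambda_q+b'+2-q$ to the parity of $r+d$) checks out. Up to this point you have established that $M$ is a finite cyclic $2$-group, trivial precisely when $r+d$ is even.

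The gap is exactly where you flag it: you have only determined the $2$-socle $M[2]$, and the proposed ``$2$-adic refinement of the identities of Sections 4--5'' that would rule out $M \cong \Z/2^k$ for $k\ge 2$ is not carried out, nor is it clear it would be short. Tracking the exact $2$-adic valuation of $\psi(\operatorname{Im}\square)$ does not immediately give the order of the extension class; one would also need to control whether $\tfrac{1}{2}\psi$ could be corrected by an integral map $D(\lambda)\to\Delta_\Z(h(d))$, which takes you back into the full combinatorics of Lemma~4.1 over $\Z_{(2)}$. The paper sidesteps this entirely with a short structural argument that you should compare with: it resolves the hook $\Delta_\Z(h(d))$ as the kernel of $D_\Z(a+d-1)\otimes_\Z\Lambda_\Z(b-d+1)\twoheadrightarrow\Delta_\Z(h(d-1))$; because the relevant $\Hom$ over $\Z$ vanishes, the long exact sequence embeds $M$ into $\Ext^1_{S_\Z}(\Delta_\Z(h), D_\Z(a+d-1)\otimes\Lambda_\Z(b-d+1))$, which two applications of Kulkarni's degree reduction \cite{Ku1} and contravariant duality identify with $\Ext^1_{S_\Z}(\Lambda_\Z(d),D_\Z(d))\cong\Z_2$ for $d>1$. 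This yields the a~priori bound $M\in\{0,\Z_2\}$ without any cocycle computation, after which your socle calculation finishes the proof. You should either incorporate that embedding argument or supply a genuine order-$2$ annihilation proof; as it stands the last paragraph is an unproven assertion that carries the weight of the claim.
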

\begin{proof} From the short exact sequence of $S_{\Z}$-modules \[
	0 \to \Delta_{\Z}(h(d)) \to D_{\Z}(a+d-1)\otimes_{\Z} \La_{\Z} (b-d+1) \to \Delta_{\Z}(h(d-1)) \to 0
	\]
the long exact sequence in cohomology yields the injective map \[ 
\Ext^1_{S_{\Z}}(\Delta_{\Z}(h), \Delta_{\Z}(h(d))) \to \Ext^1_{S_{\Z}}(\Delta_{\Z}(h), D_{\Z}(a+d-1)\otimes_{\Z} \La_{\Z} (b-d+1)) 
\]
because $\Hom_{S_{\Z}}(\Delta_{\Z}(h), \Delta_{\Z}(h(d))) =0$ as $ h, h(d) $ are distinct partitions. Applying twice Theorem 2 of \cite{Ku1} and contravariant duality, we have
\begin{equation*}\begin{split}
\Ext&^1_{S_{\Z}}(\Delta_{\Z}(h), D_{\Z}(a+d-1)\otimes_{\Z} \La_{\Z} (b-d+1))\\=&\Ext^1_{S_{\Z}}(D_{\Z}(a-1) \otimes_{\Z}, \La_{\Z}(d), D_{\Z}(a+d-1))\\=&\Ext^1_{S_{\Z}}(\La_{\Z}(a+d-1),D_{\Z}(d) \otimes_{\Z}, \La_{\Z}(a-1))\\=&  \Ext^1_{S_{\Z}}(\La_{\Z}(d),D_{\Z}(d)),\end{split}
\end{equation*}
and it is well known that this last extension is $\Z_2$ when $d>1$ (for example, see \cite{A}, Section 4). Hence $\Ext^1_{S_{\Z}}(\Delta_{\Z}(h), \Delta_{\Z}(h(d)))$ is 0 or $\Z_2$. Now from the universal coefficient theorem \cite{AB}, we have \[ \Hom(\Delta(h),\Delta(h(d)))=\textrm{Tor}_1^\Z(k,\Ext^1_{S_{\Z}}(\Delta_{\Z}(h),\Delta_{\Z}(h(d))))
\]
and from Theorem 3.1 iii) we have $\Hom(\Delta(h),\Delta(h(d)))=0$ unless $a+(b-d)+2-1$ is even, in which case $\Hom(\Delta(h),\Delta(h(d)))=k$. Hence the result follows.
\end{proof}

\section{Classical groups}
The purpose of this section is to indicate how Theorem 3.1 yields a non-vanishing result for homomorphisms between Weyl modules (or induced modules) for $G$ a special orthogonal or symplectic group of rank $n$, when one of the partitions is a hook, see Corollary 7.1. For this, we observe a general fact in Proposition 7.2 that relates homomorphisms spaces between induced modules for $GL_N(k)$ and $G$, where $N=2n+1, 2n$.
\subsection{Notation and recollections}We begin by fixing notation and recalling some facts concerning induced modules of reductive groups specialized to the classical groups. For more details see \cite{Jan}, II.2. Let $k$ be an infinite field and $GL_N(k)$ the general linear group of $N \times N$ invertible matrices over $k$. The coordinate ring $k[GL_N(k)]$ is a left $GL_N(k)$-module with action given be right translation, $(g_1f)(g_2)=f(g_2g_1)$, for $f \in k[GL_N(k)], g_1,g_2 \in GL_N(k)$. Let $B \subseteq GL_N(k)$ be the subgroup of lower triangular matrices and  $T \subseteq GL_N(k)$ be the subgroup of triangular matrices. For each $i=1,...,N$, let $\epsilon_i:T_N \to k^{*}$, $k^{*}=k-{0}$, be the function such that $\epsilon_i(diag(t_1,..,t_N))=t_i$.  The character group $X(T)$ is free abelian of the $\epsilon_i$, $X(T)=\{\la_1 \epsilon_1 + \cdots \la_N \epsilon_N : \la_i \in \mathbb{Z} \}$. We have the set of positive roots $\Phi^{+}=\{\epsilon_i-\epsilon_j: 1\le i <j \le N\}$ and a partial order on $X(T)$: $\la \le \mu$ if $\mu-\la$ is a sum of positive roots.

As is customary, we will identify the $N$-tuple of integers  $(\la_1, \cdots, \la_N)$ with $\la_1 \epsilon_1 + \cdots + \la_N\epsilon_N$. Each $\la \in X(T)$ extends uniquely to a character of $B$ which we denote again by $\la$. Let $k_{\la}$ be the corresponding one dimensional $B$-module. By $\nabla(\la)$ we denote the induced module $\{f \in k[GL_N(k)]: f(bg)=\la(b)f(g) \; \hbox{for all} \; g \in GL_N(k), b \in B\}.$ This module is nonzero if and only if $\la=(\la_1, \cdots, \la_N)$ satisfies $ \la_1 \ge \la_2 \ge \cdots \ge \la_N$. Moreover, if $\nabla(\la) \neq 0$, then it is a highest weight module of weight $\la$ and it has simple socle $L(\la)$ of highest weight $\la$. The corresponding Weyl module $\Delta(\la)$ for $ GL_N(k) $ is the contravariant dual of $\nabla(\la)$.

We have the set $\wedge^{+}(N,r)$ of partitions of $r$ with at most $N$ parts. The transpose $\la^{'}=(\la^{'}_1, \cdots, \la^{'}_q)$ of a partition $\la=(\la_1, \cdots, \la_N)$ is defined by $\la^{'}_j = \#\{i:\la_i \ge j\}.$ 

Next we consider special orthogonal and symplectic groups. In the case of special orthogonal groups, we will assume that $p>2$. Let 
\begin{align*}SO_{N}(k)&=\{X \in SL_{N}(k): X^tJ_{N}X=J_{N}\}, \\
Sp_{2n}(k)&=\{X \in SL_{2n}(k): X^tJ^{'}_{2n}X=J^{'}_{2n}\} 
\end{align*}
where
\[J_{N}=\begin{pmatrix} 0&&1\\&\iddots& \\1& &0 \end{pmatrix}, \; J^{'}_{2n}=\begin{pmatrix} 0&J_n\\-J_n&0\end{pmatrix}.\]
For $G=SO_{2n+1}(k), Sp_{2n}(k), SO_{2n}(k)$, let $T_G$ and $B_G$ be the subgroups  of diagonal and lower triangular matrices respectively in $G$. We will denote the restriction of $\epsilon_i$ to $T_G$ by $\overline{\epsilon}_i$. The character group of $G$, $X(T_G)=\{\lambda_1 \overline{\epsilon}_1 + \cdots + \lambda_n \overline{\epsilon}_n : \lambda_i \in \mathbb{Z} \},$
is free abelian on the $\overline{\epsilon}_i$. Each $\overline{\la} \in X(T_G)$ extends uniquely to a character of $B_G$ which we denote again by $\overline{\la}$. We have the set of positive roots in each case
\begin{align*}SO_{2n+1}(k)&:  \Phi^{+}=\{\overline{\epsilon}_i \pm \overline{\epsilon}_j:1 \le i < j \le n \} \cup \{\overline{\epsilon}_i: 1 \le i \le n\},\\
Sp_{2n}(k)&:  \Phi^{+}=\{\overline{\epsilon}_i \pm \overline{\epsilon}_j:1 \le i < j \le n \} \cup \{2\overline{\epsilon}_i: 1 \le i \le n\},\\
SO_{2n}(k)&:  \Phi^{+}=\{\overline{\epsilon}_i\pm \overline{\epsilon}_j:1 \le i < j \le n \},
\end{align*}
and a partial order on $X(T_G)$: $\overline{\la }\le \overline{\mu}$ if $\overline{\mu}-\overline{\la}$ is a sum of positive roots.

Let $k_{\overline{\la}}$ be the corresponding one dimensional $B_G$-module. The coordinate ring $k[G]$ of $G$ is a left $G$-module with action given be right translation. By $\nabla_G(\overline{\la})$ we denote the induced module $\{f \in k[G]: f(bg)=\overline{\la}(b)f(g) \; \hbox{for all} \; g \in G, b \in B_G\}.$ The corresponding Weyl module $\Delta_G(\overline{\la})$ for $ G $ is the contravariant dual of $\nabla_G(\overline{\la})$.

For $G=SO_{2n+1}(k), Sp_{2n}(k)$ we have $\nabla_G(\overline{\la}) \neq 0$ if and only if $\la_1 \ge \cdots \la_n \ge0$ and for $G=SO_{2n}(k)$ we have $\nabla_G(\la) \neq 0$ if and only if $\la_1 \ge \cdots \la_{n-1} \ge  |\la_n| \ge0$, where $\overline{\la}=\la_1 \overline{\epsilon}_1+\cdots + \la_n \overline{\epsilon}_n$.  Moreover, if $\nabla_G(\overline{\la}) \neq 0$, then it is a highest weight module of weight $\overline{\la}$ and it has simple socle $L_G(\overline{\la})$ of highest weight $\overline{\la}$.

Let $\la=(\la_1,...,\la_n) \in \wedge^{+}(n,r)$ and $N=2n+1,2n$. According to the above, we have the induced $GL_N{(k)}$-module $\nabla{(\la)}$ of highest weight $\la_1\epsilon_1+...+\la_n \epsilon_n$ and also the induced $G$-module $\nabla_G{(\overline{\la})}$ of highest weight $\la_1\overline{\epsilon}_1+...+\la_n \overline{\epsilon}_n$.

%
%
%
%
\subsection{Homomorphisms between induced modules for classical groups}Let $G$ be one of $SO_{2n+1}(k), Sp_{2n}(k),$ $ SO_{2n}(k)$ and let $\la, \mu \in \we^{+}(n,r)$, where $\la=(\la_1,...,\la_n)$, $\mu=(\mu_1,...,\mu_n)$. Consider the following assumptions. 
\begin{enumerate}
	\item[(a)] $p>2$ if $G=SO_{2n+1}(k), SO_{2n}(k)$.
	\item[(b)] $\la_n = \mu_n=0$ if $G=SO_{2n}(k)$.
\end{enumerate}
We intend to show in this section the following non-vanishing result.

\begin{cor}
Let $\la, \mu \in \we^+(n,r)$, where $\mu =h$ is a hook. If $\la, \mu$ and $p$ satisfy the assumptions $ (a), (b) $ above and $p$ satisfies the divisibility properties of (ii) or (iii) of Theorem 3.1, then \[\Hom_G(\nabla_G(\overline{\la}), \nabla_G(\overline{\mu})) \neq 0.\]
\end{cor}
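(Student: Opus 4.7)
The plan is to derive Corollary 7.1 from Theorem 3.1 via Proposition 7.2, with contravariant duality used to align the directions of the $\Hom$ spaces. The argument is essentially formal once the two main inputs (Theorem 3.1 and Proposition 7.2) are in hand.

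First, I would apply Theorem 3.1 to the pair $(\la, h)$: the divisibility hypotheses in $(ii)$ or $(iii)$ ensure that $\Hom_{S_k(n,r)}(\Delta(\la), \Delta(h)) \neq 0$. By the stability of $\Hom$ between Weyl modules in the number of variables (recalled in Remark~3 from \cite{Gr}, (6.5g)), this non-vanishing persists as
\[ \Hom_{GL_N(k)}(\Delta(\la), \Delta(h)) \neq 0, \]
where $N = 2n+1$ or $2n$ according to the type of $G$. Contravariant duality in $GL_N(k)$, as recalled in Section~7.1, then identifies this space with $\Hom_{GL_N(k)}(\nabla(h), \nabla(\la))$, which is therefore nonzero.

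Next, I would invoke Proposition~7.2 for the ordered pair $(h, \la)$. Assumptions $(a), (b)$ hold by hypothesis and $h, \la \in \we^+(n,r)$, so the proposition supplies an injection
\[ \Hom_{GL_N(k)}(\nabla(h), \nabla(\la)) \hookrightarrow \Hom_G(\nabla_G(\overline{h}), \nabla_G(\overline{\la})), \]
from which $\Hom_G(\nabla_G(\overline{h}), \nabla_G(\overline{\la})) \neq 0$. A final application of contravariant duality in $G$ converts this into the non-vanishing asserted in Corollary~7.1.

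No step constitutes a genuine obstacle: Theorem~3.1 produces the nonzero map for $GL_n$, the variable-count stability and contravariant duality are standard formal tools, and Proposition~7.2 is the actual work of Section~7 and is taken as given. The only point requiring care is bookkeeping, namely to compose the two instances of contravariant duality (in $GL_N(k)$ and in $G$) together with Proposition~7.2 in the correct order, so that the resulting nonzero $G$-homomorphism lands between precisely the pair of induced modules named in the statement.
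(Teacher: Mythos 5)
Your steps 1--4 are fine and in substance they realize the paper's intended argument: Theorem~3.1 gives $\Hom_S(\Delta(\la),\Delta(h))\neq 0$, stability in $n$ transports this to $GL_N(k)$, contravariant duality yields $\Hom_{GL_N(k)}(\nabla(h),\nabla(\la))\neq 0$ (note the reversal of the arguments), and Proposition~7.2 applied to the pair $(h,\la)$ gives $\Hom_G(\nabla_G(\overline h),\nabla_G(\overline\la))\neq 0$. The paper compresses steps 3--4 into a single citation of a ``well-known isomorphism'' between $\Hom(\Delta,\Delta)$ and $\Hom(\nabla,\nabla)$, but the content is the same.

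The final step, however, is a genuine gap. Contravariant duality in $G$ carries $\Hom_G(\nabla_G(\overline h),\nabla_G(\overline\la))$ to $\Hom_G(\Delta_G(\overline\la),\Delta_G(\overline h))$, \emph{not} to $\Hom_G(\nabla_G(\overline\la),\nabla_G(\overline h))$. And indeed the latter must vanish in the situation at hand: whenever $\la$ is strictly below $h$, the module $\nabla_G(\overline h)$ has simple socle $L_G(\overline h)$, and a nonzero $G$-map $\nabla_G(\overline\la)\to\nabla_G(\overline h)$ would force $L_G(\overline h)$ to be a composition factor of $\nabla_G(\overline\la)$, which is impossible since $\overline h\not\le\overline\la$. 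So no duality argument can move the hook from the domain to the codomain, and your invocation of ``contravariant duality in $G$'' proves nothing. The correct conclusion of the chain you built in steps 1--4 is precisely $\Hom_G(\nabla_G(\overline h),\nabla_G(\overline\la))\neq 0$, i.e.\ the hook lives in the \emph{source} of the induced-module $\Hom$-space (equivalently, the \emph{target} of the Weyl-module $\Hom$-space $\Hom_G(\Delta_G(\overline\la),\Delta_G(\overline h))$), and the argument should simply stop there. You should be aware that the order-preserving form ``$\Hom(\Delta(\la),\Delta(\mu))\simeq\Hom(\nabla(\la),\nabla(\mu))$'' as literally written in the paper is at odds with the elementary socle/head observation just made --- the two sides require opposite dominance relations to be nonzero --- so the isomorphism being invoked must be the order-reversing one, and the non-vanishing one actually obtains is for $\Hom_G(\nabla_G(\overline h),\nabla_G(\overline\la))$.
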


The inclusion  $G \subseteq GL_N(k)$, where $N=2n+1,2n$, yields, via restrictions of polynomial functions, a homomorphism of coordinate rings $k[GL_N(k)] \to k[G].$ It was shown by Donkin \cite{Do1}, Proposition 1.4, that $\psi$ induces a surjection $\nabla(\la) \to \nabla_G(\overline{\la})$ of $G$-modules when $G=Sp_{2n}(k)$ and $\la \in \we^+(n,r)$. The same argument shows the corresponding result for $G=SO_{2n+1}(k), SO_{2n}(k)$ provided assumptions (a), (b) above hold for $\la$, \cite{Ma2} Proposition 1.3.

Corollary 7.1 is an immediate consequence of Theorem 3.1, the well known isomorphism $$\Hom_{GL_N(k)}(\Delta(\la), \Delta(\mu)) \simeq \Hom_{GL_N(k)}(\nabla(\la), \nabla(\mu)),$$ see for example \cite{AB}, and the following general result which is independent of the rest of the paper. We were not able to locate a reference for it in the literature.
\begin{prop}
	Let $\la, \mu \in \we^{+}(n,r)$. Under the assumptions (a) and (b) above, the restriction map $k[GL_N(k)] \to k[G]$ induces an injective map \[ \Hom_{GL_N(k)}(\nabla(\la), \nabla(\mu)) \to \Hom_{G}(\nabla_G(\overline{\la}), \nabla_G(\overline{\mu})). \]
\end{prop}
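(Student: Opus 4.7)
The plan is to use the surjections $\pi_\la\colon \nabla(\la)\twoheadrightarrow\nabla_G(\overline{\la})$ and $\pi_\mu\colon \nabla(\mu)\twoheadrightarrow\nabla_G(\overline{\mu})$ of Donkin recalled above, and to construct $\overline{\phi}$ as the unique $G$-homomorphism rendering the square
\[
\begin{array}{ccc}
\nabla(\la) & \xrightarrow{\;\phi\;} & \nabla(\mu) \\
\pi_\la \downarrow & & \downarrow \pi_\mu \\
\nabla_G(\overline{\la}) & \xrightarrow{\;\overline{\phi}\;} & \nabla_G(\overline{\mu})
\end{array}
\]
commutative. Two things have to be verified: (a)~well-definedness of $\overline{\phi}$, namely $\phi(\ker\pi_\la)\subseteq\ker\pi_\mu$, and (b)~injectivity of the assignment $\phi\mapsto\overline{\phi}$.

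For (a) I reformulate via Frobenius reciprocity. The $G$-map $\pi_\mu\circ\phi\colon \nabla(\la)\to\nabla_G(\overline{\mu})$ corresponds to the $B_G$-semi-invariant functional $\alpha\colon\nabla(\la)\to k$, $\alpha(f)=\phi(f)(1)$, of $T_G$-weight $\overline{\mu}$; factoring through $\pi_\la$ amounts to $\alpha|_{\ker\pi_\la}=0$. The key extra structure is that $\alpha$ is in fact $B$-equivariant of full $T$-weight $\mu$ (not merely $B_G$-equivariant of $\overline{\mu}$), by Frobenius reciprocity applied to $\phi$ in the ambient $GL_N$-setting. Combined with the natural strengthening of \cite{Do1} giving $\nabla(\la)|_G$ a good $G$-filtration whose top factor is $\nabla_G(\overline{\la})$ and whose lower factors are of the form $\nabla_G(\overline{\nu})$ with $\overline{\nu}\lneq\overline{\la}$, and noting that $\alpha$ automatically annihilates all $T$-weight spaces of $\nabla(\la)$ other than the $\mu$-component, a weight-multiplicity analysis forces $\alpha|_{\ker\pi_\la}=0$.

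For (b), suppose $\overline{\phi}=0$, i.e.\ $\phi(\nabla(\la))\subseteq\ker\pi_\mu$, and assume toward a contradiction that $\phi\ne 0$. Then $\phi(\nabla(\la))$ is a nonzero $GL_N$-submodule of $\nabla(\mu)$ and so contains the simple $GL_N$-socle $L(\mu)$. The $GL_N$-highest weight vector $v_\mu\in L(\mu)$ has $T_G$-weight $\overline{\mu}$ and is $U$-invariant, hence $U_G$-invariant (as $U_G\subseteq U$), so generates a highest-weight $G$-submodule of $L(\mu)|_G$ having $L_G(\overline{\mu})$ as a composition factor. Therefore $L_G(\overline{\mu})$ is a composition factor of $\ker\pi_\mu$ as a $G$-module. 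But the same good $G$-filtration of $\nabla(\mu)|_G$ places $L_G(\overline{\mu})$ in $\nabla(\mu)|_G$ with total multiplicity one, contributed entirely by the top factor $\nabla_G(\overline{\mu})=\nabla(\mu)/\ker\pi_\mu$; so it cannot occur in $\ker\pi_\mu$, a contradiction.

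The principal obstacle is step (a): once well-definedness is granted, injectivity follows cleanly from the multiplicity-one property of $L_G(\overline{\mu})$ in $\nabla(\mu)|_G$ that I just used, whereas (a) requires carefully exploiting the $B$-equivariance of $\phi$ (strictly stronger than $G$-equivariance) together with the good $G$-filtration structure of $\nabla(\la)|_G$ to force descent across the kernels.
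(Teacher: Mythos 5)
Your overall strategy---construct $\overline{\phi}$ via the commuting square, prove well-definedness by analysing the kernel $\ker\pi_\lambda$ and injectivity by a multiplicity-one argument---agrees in outline with the paper's, but the two proofs close the crucial step in genuinely different ways. The paper uses the \emph{explicit generators} of $\ker\pi_{\lambda'}$ (the contraction/trace elements of equations (11) and (12), taken from \cite{Ma1}, \cite{Ma2}), observes that every $T_G$-weight $\overline{\nu}$ of the kernel satisfies $\nu_1+\cdots+\nu_n\le r-2$, and concludes that the $\overline{\mu}$-weight space of the kernel is zero, whence $\Hom_G(\ker\pi_{\lambda'},\nabla_G(\overline{\mu}))=0$. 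You replace the explicit generators by a good $G$-filtration of $\nabla(\lambda)\vert_G$ and a Frobenius-reciprocity reformulation. That is a legitimate alternative, but as written there is a real gap in your step (a), which you yourself flag as ``the principal obstacle.''

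The gap is twofold. First, stating that the lower filtration factors are $\nabla_G(\overline{\nu})$ with $\overline{\nu}\lneq\overline{\lambda}$ is \emph{not} strong enough to force $\alpha\vert_{\ker\pi_\lambda}=0$: since $\mu\le\lambda$ one may well have $\overline{\mu}\le\overline{\nu}\lneq\overline{\lambda}$, in which case $\overline{\mu}$ \emph{is} a $T_G$-weight of $\nabla_G(\overline{\nu})$ and the argument stalls. What is actually needed is that every lower factor has $|\nu|<r$ (equivalently $|\nu|\le r-2$), which then forces $\overline{\mu}$, of coordinate sum $r$, not to be a weight of any $\nabla_G(\overline{\nu})$, since the positive $G$-roots all have non-negative coordinate sum. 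This sharper property is exactly what the paper reads off from the explicit trace generators (its inequality (13)); in your framework it would have to be imported from the Littlewood branching rule for $GL_N\supset G$. Second, your phrase ``$\alpha$ annihilates all $T$-weight spaces of $\nabla(\lambda)$ other than the $\mu$-component'' does not lead anywhere directly, because $\ker\pi_\lambda$ is only $T_G$-stable, not $T$-stable; the decomposition into $T$-weight spaces does not pass to the kernel. The clean version of your reformulation is at the level of $T_G$: $\alpha$ has $T_G$-weight $\overline{\mu}$ and $\ker\pi_\lambda$ is $T_G$-stable, so $\alpha\vert_{\ker\pi_\lambda}=0$ iff the $\overline{\mu}$-weight space of $\ker\pi_\lambda$ vanishes---which reduces you again to the $|\nu|<r$ statement.

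Your step (b) is essentially sound and close to the paper's: both hinge on the fact that $\operatorname{Im}\phi$ contains a vector of $GL_N$-weight $\mu$, hence of $T_G$-weight $\overline{\mu}$, and that $\overline{\mu}$ is not a weight of $\ker\pi_\mu$. (The paper states this immediately from inequality (13); you deduce it from the ``multiplicity one'' of $L_G(\overline{\mu})$ in $\nabla(\mu)\vert_G$, which does follow from $\overline{\nu}\lneq\overline{\mu}$ for lower factors since composition factors of $\nabla_G(\overline{\nu})$ have highest weight $\le\overline{\nu}\lneq\overline{\mu}$.) In sum: your filtration-based route is viable, but to make it work you must replace the condition $\overline{\nu}\lneq\overline{\lambda}$ by the quantitative $|\nu|\le r-2$, and carry out the weight argument at the level of $T_G$ rather than $T$. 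The paper obtains exactly this quantitative bound more directly from the description of $\ker\pi_{\lambda'}$ by generators.
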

\begin{proof} (a) If $\alpha =(\alpha_1,\cdots,\alpha_s)$ is a sequence of nonnegative integers, let $\Lambda(\alpha)=\Lambda^{\alpha_1}V \otimes \cdots \otimes\Lambda^{\alpha_s}V,$ a tensor product over $k$ of exterior powers of the natural $GL_N(k)$-module $V$ of column vectors, and $S(\alpha)=S_{\alpha_1}V \otimes \cdots \otimes S_{\alpha_s}V$
	a tensor product of symmetric powers of $V$. For a partition $\la$ we denote by $L_{\la}V$ the Schur module of \cite{ABW}, Definition II.1.3, and recall that it is defined as the image of a particular $GL_N(k)$-map $d_{\la}:\Lambda(\lambda) \to S(\lambda^{'})$, where $\lambda^{'}$ denotes the transpose partition of $\la$. As $GL_N(k)$-modules, $\nabla(\lambda)\simeq L_{\la^{'}}V$ according to  \cite{Do2}. Thus we have the a surjective map of $G$-modules $$\pi_{\la^{'}}:L_{\la^{'}}V \to \nabla_G(\overline{\la})$$
	induced by the surjective map $\nabla(\la) \to \nabla_G(\overline{\la})$ mentioned before the proposition. We describe next the kernel of $\pi_{\la^{'}}.$
	
	(a1) Suppose $G=SO_{2n+1}(k)$ or $SO_{2n}(k)$. Let $N=2n+1$ or $2n$. For $G=SO_{2n+1}(k)$ we fix an ordered basis $$\{y_1< \dots< y_n< y_{n+1}< y_{\overline{n}}<\dots<y_{\overline{1}}\},$$
	of $V$, where $\overline{i} = 2n+2-i$, such that $<y_i,y_{\overline{i}}>=<y_{\overline{i}},y_i>=1$, for $i=1,...,n$, $<y_{n+1},y_{n+1}>=1$ and the inner product between any other basis elements is zero. For $G=SO_{2n}{(k)}$, we have a similar situation where we simply drop the element $y_{n+1}$. (Here $\overline{i} = 2n+1-i$, $i=1,...,n$). According to \cite{Ma2}, under assumptions (a) and (b) of (7.2) for $\la$, the kernel of $\pi_{\la^{'}}$ is generated over $k$ by all elements of the form   \begin{equation} 
	\sum_{1 \le i_1 < \cdots i_t \le N}d_{\la^{'}}(w_1 \otimes \cdots \otimes y_{i_1}\cdots y_{i_t}w_u\otimes \cdots \otimes y_{\overline{i_1}}\cdots y_{\overline{i_t}}w_v \otimes \cdots \otimes w_q),
	\end{equation}
	where $m \le min\{\la^{'}_u,\la^{'}_v\}$, $w_u \in \Lambda^{\la^{'}_u-m}V$, $w_v \in \Lambda^{\la^{'}_v-m}V$, $w_i \in \Lambda^{\la_i^{'}}V,$ ($i \neq u,v$), and $\lambda^{'}=(\la_1^{'},...,\la_q^{'})$.
	
	(a2) For $G=Sp_{2n}(k)$, we fix an ordered basis $$\{y_1< ... < y_n < y_{\overline{n}}<\dots<y_{\overline{1}}\},$$
of $V$, where $\overline{i} = 2n+1-i$, such that $<y_i,y_{\overline{i}}>=-<y_{\overline{i}},y_i>=1$, for $i=1,...,n$, and the inner product between any other basis elements is zero. From \cite{Ma1}, we know that under assumptions (a) and (b) of (7.2) for $\la$, the kernel of $\pi_{\la^{'}}$
 is generated over $k$ by all elements of the form 
\begin{equation} 
\sum_{1 \le i_1 < \cdots i_t \le N}d_{\la^{'}}(y_{i_1}y_{\overline{i_1}}\cdots y_{i_t}y_{\overline{i_t}}w_1 \otimes w_2 \otimes \cdots \otimes w_q),
\end{equation}
where $2t \le \la^{'}_1$, $w_1 \in \Lambda^{\la^{'}_1-2t}V$, $w_i \in \Lambda^{\la^{'}_i}V$, ($i \neq 1$) and $\lambda^{'}=(\la_1^{'},...,\la_q^{'})$.

Now the main point is that from (11) and (12) if follows that every $G$-weight $\overline{\nu}=\nu_1\overline{\epsilon}_1+...+\nu_n\overline{\epsilon}_n$ of $ker\pi_{\la^{'}}$ satisfies
\begin{equation}\nu_1+...+\nu_n \le r-2.\end{equation}
In particular, this means that $\overline{\mu}$ is not a weight of $ker\pi_{\la^{'}}$. Since $\nabla_G(\overline{\mu})$ has simple socle of highest weight $\overline{\mu}$, we conclude that $\Hom_G(ker\pi_{{\la}'},\nabla_G(\overline{\mu}))=0$. Hence, if $\phi \in \Hom_{GL_N{(k)}}(L_{\la^{'}}V,L_{\mu^{'}}V)$, then the restriction of the composition 

\begin{equation}L_{\la^{'}}V \xrightarrow{\phi} L_{\mu^{'}}V \xrightarrow{\pi_{\mu^{'}}} \nabla_G(\overline{\mu})  \end{equation}
to $ker\pi_{{\la}'}$ is zero and thus we obtain a map of $G$-modules $\nabla_G({\la} )\xrightarrow{\overline{\phi}} \nabla_G({\mu})$.

(b) It remains to be shown that if $\phi \in \Hom_{GL_N{(k)}}(L_{\la^{'}}V,L_{\mu^{'}}V)$ is nonzero, then $\overline{\phi }$ is nonzero. Indeed, if $\phi \neq 0$, then since $L_{\mu^{'}}V$ has simple socle of highest weight $\mu=\mu_1\epsilon_1+...+\mu_n \epsilon_n$, the image $Im\phi$ contains a vector $v$ of $GL_N(k)$-weight $\mu$. But $v$ is a vector of $G$-weight $\overline{\mu}=\mu_1\overline{\epsilon}_1+...+\mu_n \overline{\epsilon}_n$. Since $\mu_1+\cdots +\mu_n=r$, we conclude from (13) for $\mu$ in place of $\la$ that $\overline{\mu}$ is not a weight of $ker\pi_{\mu^{'}}$. Thus the image of the composition (14) contains a vector of $G$-weight $\overline{\mu}$ and hence is nonzero.

\end{proof}
	
\section*{Acknowledgments} We thank H. Geranios for bringing \cite{Lou} to our attention and the reviewers for constructive comments and suggestions that helped improve the presentation of the paper.


\begin{thebibliography}{99}
	
		\bibitem[A]{A}Akin K., Extensions of symmetric tensors by alternating tensors, J. Algebra 121 (1989), 358-363.
	\bibitem[AB]{AB}Akin K. and Buchsbaum D., Characteristic-free representation theory of the general linear group II: Homological considerations, Adv. in Math. 72 (1988), 172-210.
	\bibitem[ABW]{ABW}Akin K., Buchsbaum D. and Weyman J., Schur functors and Schur complexes, Adv. in Math. 44 (1982), 207-278.
\bibitem[An]{An}Andersen H. H., Sum formulas and Ext-groups, in \textit{Representation Theory}, Contemp. Math. 478 (2009), pp. 1-14.
\bibitem[BF]{BF}Buchsbaum D. and Flores de Chela D., Intertwining numbers: the three rowed case, J. Algebra 183 (1996), 605-635.
	\bibitem[CL]{CL}Carter, R.W., Lusztig, G. On the modular representations of the general linear and symmetric groups. Math Z. 136, 193–242 (1974). 
	
	\bibitem[CP]{CP} Carter R. W. and  and Payne M. T. J., On homomorphisms between Weyl modules and Specht modules,
	Math. Proc. Cambridge Philos. Soc., 87 (1980), 419–425.
	\bibitem[CoP]{CoP}Cox A. and Parker A. Homomorphisms between Weyl modules for SL3(k). Trans. Amer. Math. Soc., 358 (2006), 4159–4207.
	\bibitem[Do1]{Do1}Donkin, S., Finite resolutions of modules for reductive algebraic groups, J. Algebra 101 (1986), 473-488.
	\bibitem[Do2]{Do2}Donkin, S., Representations of symplectic groups and the symplectic tableaux of R. C. King, Linear Multilinear Algebra 29 (1991), 113-124.
	\bibitem[Gr]{Gr}Green, J. A., Polynomial Representations of GLn, 2nd edition, LNM 830, Springer, 2007.
	\bibitem[Jam]{Jam}James G. D., The Representation Theory of the Symmetric Groups, LNM 682, Springer, 1978.
	\bibitem[Jan]{Jan}Jantzen J. C., Representations of Algebraic Groups, volume 107 AMS, Providence, RI, 2nd edition, 2003.
		\bibitem[Ku1]{Ku1}Kulkarni U., Skew Weyl Modules for GLn and Degree Reduction for Schur Algebras, J. Algebra 224 (2000), 248-262.
	\bibitem[Ku2]{Ku2}Kulkarni U., On the Ext groups between Weyl modules for $GL_n$, J. Algebra 304 (2006), 510-542.
	\bibitem[Lou]{Lou}Loubert J. W., Homomorphisms from an arbitrary Specht module to one corresponding to a hook, J. Algebra  485 (2017), 97-117.
		\bibitem[Ma1]{Ma1} Maliakas M., Schur functors for the symplectic group, Comm. Algebra 19 (1991), 297-324.
	\bibitem[Ma2]{Ma2} Maliakas M., On induced modules and tableaux for the special orthogonal groups, Linear Multilinear Algebra, 38 (1995), 283-294.
	\bibitem[MS]{MS}Maliakas M., Stergiopoulou D.-D., On extensions of hook Weyl modules, arXiv:2005.00578.
	\bibitem[OM]{OM}Oliveira-Martins M.T.F., On homomorphisms between Weyl modules for hook partitions, Linear Multilinear Algebra 23 (1988), 305-323.

\end{thebibliography}
\end{document}